\def\namedlabel#1#2{\begingroup
  #2%
  \def\@currentlabel{#2}%
  \phantomsection\label{#1}\endgroup
}
\newcommand {\Rbb}{\mathbb{R}}
\newcommand {\Pbb}{\mathbb{P}}
\newcommand {\Ebb}{\mathbb{E}}
\newcommand {\Hbb}{\mathbb{H}}
\newcommand {\Id}{I}
\newcommand {\Ncal}{\mathcal{N}}
\newcommand {\Fcal}{\mathcal{F}}
\newcommand {\Bcal}{\mathcal{B}}
\newcommand {\Tcal}{\mathcal{T}}
\newcommand {\Bscr}{\mathscr{B}}
\newcommand {\Wscr}{\mathscr{W}}
\newcommand {\CovSpace}{\mathscr{K}}
\newcommand {\CovSpacePos}{\mathscr{K}_+}
\newcommand{\diff}{\,\textnormal{d}}
\newcommand {\Acov}{A}
\newcommand {\Fcov}{F}
\newcommand {\Gcov}{G}
\newcommand{\G}[1]{\mathcal{G}_{#1}}
\newcommand{\CFG}{C}
\renewcommand{\H}{H_n}
\newcommand{\X}{X}
\newcommand {\tr}{\mathrm{tr}}
\newcommand {\trace}{\tr}
\newcommand{\range}{\mathrm{range}}
\newcommand{\Frechet}{Fréchet }
\newcommand{\Tan}{\Tcal}
\newcommand{\der}{d}
\newcommand{\bary}{\Xi}
\newcommand{\empbary}{{\widehat\Xi_n}}
\newcommand{\empbaryk}{{\widehat\Xi_{n_k}}}
\newcommand{\empbarykl}{{\widehat\Xi_{n_{k_l}}}}
\theoremstyle{plain}
\newtheorem{theorem}{Theorem}
\newtheorem{lemma}{Lemma}
\newtheorem{proposition}{Proposition}
\newtheorem{corollary}{Corollary}
\theoremstyle{remark}
\newtheorem{remark}{Remark}
\theoremstyle{plain}
\numberwithin{equation}{section}
\newcommand{\subjclass}[2][1991]{%
  \let\@oldtitle\@title%
  \gdef\@title{\@oldtitle\footnotetext{#1 \emph{Mathematics subject classification.} #2}}%
}
\newcommand{\keywords}[1]{%
  \let\@@oldtitle\@title%
  \gdef\@title{\@@oldtitle\footnotetext{\emph{Key words and phrases.} #1.}}%
}
   \title{{Large Sample Theory for Bures--Wasserstein Barycentres 
 }}
\author{
  Leonardo V.\ Santoro
  \qquad 
  Victor M. Panaretos   \\ {\footnotesize{\texttt{leonardo.santoro@epfl.ch}  \qquad\,\,\texttt{victor.panaretos@epfl.ch}}}}
  \affil{Institut de Math\'ematiques\\École Polytechnique Fédérale de Lausanne}
\begin{document}

\maketitle

\begin{abstract}
We establish a strong law of large numbers and a central limit theorem in the Bures-Wasserstein space of covariance operators -- or equivalently centred Gaussian measures -- over a general separable Hilbert space. Specifically, we show that empirical barycentre sequences indexed by sample size are almost certainly relatively compact, with accumulation points comprising population barycentres. We give a sufficient regularity condition for the limit to be unique. When the limit is unique, we also establish a central limit theorem under a refined pair of moment and regularity conditions. 
 Finally, we prove strong operator convergence of the empirical optimal transport maps to their population counterparts.
Though our results naturally extend finite--dimensional counterparts, including associated regularity conditions, our techniques are distinctly different owing to the functional nature of the problem in the general setting. A key element is the characterisation of compact sets in the Bures-Wasserstein topology that reflects an \emph{ordered} Heine-Borel property of the Bures--Wasserstein space.

\medskip
    \noindent \textbf{MSC2020 classes:}  60B12, 60G57, 60H25, 62R20, 62R30 \\
    \textbf{Key words:} Central Limit Theorem, covariance operator, \Frechet mean, Law of Large Numbers, optimal transport.  

\end{abstract}

\begin{footnotesize}
\tableofcontents
\end{footnotesize}

\section{Introduction}

Covariance operators encode the second-order variation structure of a stochastic process, and are ubiquitous in the theory, application and computational analysis thereof. In particular, their spectral analysis provides a canonical means to decompose the associated random process into uncorrelated and orthogonal components, via the celebrated Karhunen-Lo\`eve expansion. Consequently, they are fundamental tools for statistical inference on random functions, including estimation, testing, regression, and classification.  Increasingly, covariance operators are becoming the main target of statistical interest in several contexts. For instance when several different processes, corresponding to different conditions or populations, are to be contrasted. Or, when the random phenomenon under study gives rise to outcomes/realisations of covariance form, such as in the analysis of diffusion tensors in medical imaging, or in  the study of functional connectivity in neuroscience. Trace-one covariance operators also naturally arise in mathematical physics and quantum information, in the form of density operators that describe the statistical state of a quantum system. Section~\ref{sec : applications} provides a more detailed discussion of several motivating applications for the study of collections of infinite-dimensional covariance operators.

\medskip

At an abstract level, collections of covariance operators can be seen as corresponding to realisations of a random element $\Sigma$ in the space of trace-class nonnegative definite operators $\CovSpace=\CovSpace(\Hbb)$ acting on a Hilbert space $\Hbb$. Given a metric $\Pi$ on $\CovSpace$, a barycentre (or \Frechet mean) $\bary$ of $\Sigma$ is a minimiser of the functional 
$$
 F \mapsto \Ebb [\, \Pi(\Sigma, F)^{2}\,].
$$
Similarly, given a sample $\Sigma_1, \ldots, \Sigma_n$, consisting of independent and identically distributed (i.i.d.) copies of $\Sigma$, an empirical barycentre (or empirical \Frechet mean) $\widehat{\bary}_n$ is a minimiser of the empirical functional
$$
 F\mapsto \frac{1}{n} \sum_{i=1}^{n} \Pi\left(\Sigma_{i}, F\right)^{2}.
$$
Questions of existence and uniqueness of such \Frechet means $\bary$ are heavily dependent on the nature of $\Pi$ and $\Hbb$. The same is true of the large sample ($n\to\infty$) theory relating the empirical \Frechet mean $\widehat{\bary}_n$ to its population counterpart $\bary$.

These questions are well understood when the Hilbert space $\Hbb$ is finite-dimensional, i.e. when dealing with covariance matrices on $\mathbb{R}^d$. They are also well understood for infinite-dimensional $\Hbb$, provided the metric $\Pi$ is induced by a norm, for example by embedding $\CovSpace$ in the (Hilbert) space of Hilbert-Schmidt operators or the (Banach) space of trace class operators. Nevertheless, it is often preferable from a statistical point of view, to contrast covariances in an ``intrinsic" metric, rather than via linear embedding.

When the Hilbert space $\Hbb$ is infinite-dimensional and we use an ``intrinsic" metric $\Pi$, the combined non-linear \emph{and} functional structure of $\CovSpace$ elicits subtle difficulties that escape existing results. The purpose of this paper is to establish key results for barycentres in this case, including unique existence and large sample theory (Law of Large Numbers and Central Limit Theorem). We focus specifically on the \emph{Bures-Wasserstein metric} on $\CovSpace$,
 $$\Pi(\Gcov_1,\Gcov_2)^2 =  \tr(\Gcov_1)+\tr(\Gcov_2)-2\sqrt{\tr(\Gcov^{\sfrac{1}{2}}_2\Gcov_1\Gcov_2^{\sfrac{1}{2}})},\qquad \Gcov_1,\Gcov_2\in\CovSpace.$$
{When $\mathrm{dim}(\Hbb)<\infty$ this metric arises in quantum information theory, where it is used as a metric on density operators, i.e. the nuclear unit ball of $\CovSpace$ (see e.g. \citet{bhatia2019bures}). It induces an Alexandrov geometry with singularities, where the singular set is stratified according to the dimension of the range of the corresponding covariance matrices  (\citet{takatsu2010wasserstein}}). Contrary to several intrinsic covariance metrics which are defined only for $\mathrm{dim}(\Hbb)<\infty$, the Bures-Wasserstein metric extends readily to the general case $\mathrm{dim}(\Hbb)=\infty$, because it does not rely on quantities such as determinants or inverses. 

The use of this metric in an infinite-dimensional statistical context  was initiated (under a different expression) by \citet{pigoli2014distances}, who extended the finite-dimensional Procrustes metric, which is popular in statistical shape theory and has previously been seen to hold advantages in the analysis of covariance \emph{matrices}  \citep{dryden2009non}. It was then shown by \citet{masarotto2019procrustes} and \citet{bhatia2019bures}, in finite and infinite dimensions respectively, that the Procrustes metric between two covariances coincides with the 2-Wasserstein metric between the corresponding centred Gaussian processes -- the infinite-dimensional analogue of the Bures--Wasserstein metric. The implied connection to optimal transport further highlighted the canonical role of this metric for statistics on covariance operators, and elucidated certain key properties, at least at the sample level. However, questions relating to the ``population" barycentre, and especially questions pertaining to large sample theory, remained open.

In this paper, we establish results concerning existence, uniqueness of the \Frechet means (Theorem~\ref{thm : existence and uniqueness}). We establish a strong law of large numbers (Theorem~\ref{thm : consistency of Frechet mean}), by constructing a class of compact subsets of the Bures--Wasserstein space, which may be of independent interest. We link Bures-Wasserstein barycentres to a certain fixed-point equation (Proposition~\ref{prop : fixed point}) and address their \textit{regularity} (Proposition \ref{prop : bary positive}). We then establish the pointwise (strong operator topology) consistency of empirical optimal transport maps to their population counterparts (Theorem~\ref{thm : convergence of optimal maps}). Finally, establish a Central Limit Theorem  for \Frechet means (Theorem~\ref{thm : clt}).  While such results are arguably of interest in their own right, they are also basic to statistical inference on covariance operators, where the nature of problems is inherently infinite-dimensional (see Section~\ref{sec : applications}).

Our existence/uniqueness results naturally generalise finite-dimensional counterparts, including associated regularity conditions. However, the problem of limit theory is fundamentally more challenging in the infinite dimensional case, and our analysis necessarily relies on a distinctly different approach. This is because arguments/conditions that are essential to the finite dimensional limit theory are inherently incompatible with general Hilbert spaces. For instance, \citet{le2017existence} treat existence and consistency (law of large numbers) in the context of (measures on) \emph{locally compact spaces}, which distinctly rules out the case of (measures on) general Hilbert spaces. Similarly, \citet{evans2020strong} treat the consistency of \Frechet mean sets (in the absence of uniqueness) on abstract metric spaces that satisfy the Heine-Borel property, i.e.\ for which closed and bounded sets are compact. As for rates of convergence, \citet{le2022fast} establish the convergence rate of the empirical to the population barycentre, but under a minimal eigenvalue gap condition, fundamentally incompatible with the infinite dimensional case. \citet{kroshnin2021statistical}, on the other hand, study central limit theory, but use regularity results for Gaussian transport maps in $\Hbb=\mathbb{R}^d$, $d<\infty$, which are intrinsically finite-dimensional and do not hold true in infinite-dimensional Hilbert spaces.

\section{Motivating Applications}\label{sec : applications}

In this section, we briefly survey some areas where collections of covariance operators on infinite dimensional Hilbert spaces naturally arise as a main object of interest.

\paragraph*{Multiple Functional Populations}
A  natural statistical application in which covariance operators are the main object of interest typically occurs in situations where several different ``populations" of functional data are considered, with each population characterized by distinct structural features.
Such variation may in fact be present at the level of the mean (leading to what has been known as Functional Analysis of Variance), as well as at the level of the covariance operators.
That is, each
$k = 1, \dots,K$ of the $K$ populations arises as $n_k$ copies of a random function $X_k$ with population specific mean $\mu_k$ and covariance operator $\Sigma_k$. In other words, we observe $K$ independent samples $\{X_{i,1}\}_{i=1,\dots,n_1}, \dots, \{X_{i,k}\}_{i=1,\dots,n_k}$ of functions with well defined respective means $\{\mu_i\}_{i=1,\dots,K}$ and covariances $\{\Sigma_i\}_{i=1,\dots,K}$.
The classical functional ANOVA makes the assumption that the second order variations be the same across the populations, and investigates differences at the level of the mean. However, one might also be interested in discerning whether the
different groups of functions manifest the same type of dispersion \textit{relative} to
their mean. This amounts to understanding how strongly collections of covariances deviate, relative to some notion of mean theoref. This problem has received a lot of attention in the statistical literature. For instance, \citet{benko2009common} probed for similarities/differences in the stochastic behaviour of (log returns of) implied volatilities, for different maturities.  \citet{panaretos2010second} investigate how the mechanical properties (seen as stochastic fluctuations) of short strands of DNA might relate to their based-par sequences.  \citet{ferraty2006nonparametric} compare surfaces representing log spectrograms of short spoken words spoken by different individuals.  \citet{pigoli2014distances} study variations in speech frequency/intensity within different languages. For an in depth discussion and more examples we refer to \cite{masarotto2022transportation} and discussion therein.

\paragraph*{FMRI and Functional Connectivity. } An important domain in neuroscience is the study of functional connectivity. Here, one essentially observes longitudinal samples of families of covariances,  obtained via functional Magnetic Resonance Imaging (fMRI). This data measure brain activity by detecting changes associated with blood flow  for each subject in a sample (\citet{lindquist2008statistical}), and understanding the corresponding covariance structure provides insight into the brain’s functional organization and its modelling. In fact, we have recently observed an explosive growth in the number of neuroimaging studies performed using fMRIs: see for instance \citet{dai2019analyzing} -- who analyse dynamical brain functional connectivity as trajectories on space of covariances, or \citet{petersen2019frechet} -- who assess the co-development of brain regions in infants as a correlation matrix that varies with age. In principle, the phenomena under study evolve in the continuum, and in the limit concern covariance operators. It is important to therefore have statistical methodology that either directly applies to the case of operators, or at least guarantees that existing methods will scale stably with finer resolution.

\paragraph*{Functional Time Series and Covariance Flows.} Infinite dimensional covariance operators naturally arise in the analysis of stationary functional time series (FTS, e.g. \citet{koner2023second}) in the form of the (power) spectral density operator: a collection of covariance operators indexed by frequency (see \citet{panaretos2013fourier}). These constitute the Fourier transform of the time-lagged autocovariance operators of the FTS, and allow for the spectral representation of the FTS via its Cram\'er-Karhunen-Lo\`eve representation, which serves as the starting point for many statistical tasks. Each functional time series is thus associated to a flow of covariance operators, which can be seen as a process in the Bures-Wasserstein space (e.g. \citet{santoro2023random}). The study of such processes and the development of corresponding statistical methods and theory rests importantly on the avaialbility of large sample theory in the Bures-Wasserstein space.

\paragraph*{Quantum Information} 
In quantum information theory, covariance operators with unit trace play a central role as they represent density operators, which describe the statistical state of a quantum system. Quantum probability adopts the formalism of self-adjoint operators as a non-commutative analogue of classical random variables and probability measures (e.g., \citet{gill2004teleportation}). Physical systems can be either finite or infinite-dimensional, and our limit theory facilitates the extension of results to general quantum observables.
For a given set of density matrices, quantum state averaging is traditionally achieved through linear averaging methods. However, as \citet{kroshnin2021statistical} observe, the Bures-Wasserstein barycenter offers an alternative approach, defining the "most typical" representative state based on the fidelity measure provided by the Bures-Wasserstein metric.

\section{Background and Notation}

\subsection{Preliminaries}
Let $\Hbb$ be a separable Hilbert space with inner product $\langle\cdot,\cdot\rangle \::\: \Hbb\times\Hbb\rightarrow\Rbb$ and induced norm $\|\cdot \|\::\: \Hbb \rightarrow \Rbb_{+}$, with $\mathrm{dim}(\Hbb)\in \mathbb{N}\cup\{\infty\}.$
We denote by $\Bscr$ the space of bounded linear operators $\Acov\::\: \Hbb\rightarrow \Hbb$. For $f,g\in\Hbb$, the \textit{tensor product} $f\otimes g \::\: \Hbb\rightarrow\Hbb$ is the linear operator defined by:
$$
(f\otimes g)u = \langle g,u\rangle f,\qquad u\in\Hbb.
$$ 
For a linear operator $\Acov:\Hbb\rightarrow\Hbb$, we define its adjoint as the unique operator $\Acov^*$ such that $\langle \Acov u, v \rangle = \langle u, \Acov^* v \rangle$ for all $u,v\in \Hbb$. We say that an operator $\Acov$ is \textit{self-adjoint} if $\Acov=\Acov^*$. We say that $\Acov$ is \textit{non-negative}, and write $\Acov\succeq 0$ if it is self-adjoint, and satisfies $\langle\Acov h, h\rangle \geq 0$ for all $u\in\Hbb$. {We say that $\Acov$ is \textit{positive} or \emph{regular}, and write $\Acov\succ 0$ if it is self-adjoint, and satisfies $\langle\Acov h, h\rangle >0 $ for all $u\in\Hbb\setminus\{0\}$ (the reason for the term ``regular" will become clear in the next subsection). } We say that $\Acov$ is compact if for any bounded sequence $\{h_n\}\subset  \Hbb$, $\{\Acov h_n\}\subset  \Hbb$ contains a convergent sub-sequence.
If $\Acov$ is a non-negative, \textit{compact} operator, then there exists a unique non-negative operator denoted by $\Acov^{\sfrac{1}{2}}$ that satisfies $( \Acov^{\sfrac{1}{2}} )^2 = \Acov$.
The \textit{kernel} of $\Acov$ is denoted by $\ker(\Acov) = \{h\in\Hbb\::\: \Acov h= 0\}$, and its \textit{range} by $\range(\Acov)=\{\Acov h \::\: h \in \Hbb\}$. 
We denote the \textit{trace} of an operator $\Acov$, when defined, by $\tr(\Acov)= \sum_{i\geq 1}\langle \Acov e_i,e_i\rangle $, where $\{e_i\}_{i\geq 1}$ is a Complete Orthonormal System (CONS) of $\Hbb$. We write
 $$
 \|\Acov\|_\infty:=\sup_{\|h\|=1}\|\Acov h\|,\quad 
 \|\Acov\|_2:= \sqrt{\tr(\Acov^*\Acov)}, \quad 
 \|\Acov\|_1:= \tr(\sqrt{\Acov^*\Acov})
 $$
 for the operator norm,   Hilbert-Schmidt norm and trace norm, respectively. For any operator $\Acov$, one has
 $$
  \|\Acov\|_\infty \leq
 \|\Acov\|_2\leq
 \|\Acov\|_1.
 $$
 We denote by $\Bscr,\Bscr_1$ and $\Bscr_2$ the space of
 bounded, trace-class and Hilbert-Schmidt  linear operators on $\Hbb$, respectively.
 Note that if $\Acov \in \Bscr_1$ and $\Acov\succeq 0$, then $\Acov^{\sfrac{1}{2}}\in \Bscr_2$ and  $
 \|\Acov^{\sfrac{1}{2}}\|_2^2 = \|\Acov\|_1
 $.
We write $\Id$ for the identity operator on $\Hbb$. 

\medskip

 We denote by $\CovSpace$  the space of non-negative definite and trace-class operators on $\Hbb$,
 $$\CovSpace\equiv \CovSpace(\Hbb)=\{\Fcov: \Hbb\to \Hbb : \Fcov\succeq 0\,\, \&\,\, \|F\|_1<\infty\},$$
and by $\CovSpacePos$ its subset of strictly positive elements.

 \medskip

Let us recall the concept of \textit{Bochner} integral of measurable functions taking values in normed spaces, following \citep[][Definitions 2.6.1-3]{hsing2015theoretical} 
Let $(E,\Omega,\mu)$ be a measurable space and $(\Bcal,\|\cdot\|_{\Bcal})$ a Banach space.
A function $f: E \rightarrow \Rbb$ is called simple if it can be represented as
$
f(\cdot)=\sum_{i=1}^k 1_{E_i}(\cot) g_i
$
where $\{E_i\}_{i=1,\dots,k}\subset E$ are measurable and $\{g_i\}_{i=1,\dots,k}\subset \Bcal$. A simple function is Bochner integral if $\sum_{i=1}^k\mu(E_i)<\infty$, in which case its \textit{Bochner} integral of a simple function $f$ is  defined as
$$
\int_E f d \mu :=\sum_{i=1}^k \mu\left(E_i\right) g_i
$$
This definition does not depend on the particular representation of $f$. More generally, any measurable function $f$ is said to be Bochner integrable if there exists a sequence $\left\{f_n\right\}$ of simple and Bochner integrable functions such that
$
\lim _{n \rightarrow \infty} \int_E\left\|f_n-f\right\| d \mu=0,
$
in which case the Bochner integral of $f$ is defined as 
$$
\int_E f d \mu:=\lim _{n \rightarrow \infty} \int_E f_n d \mu
$$

\medskip

Let us also recall the notion of \Frechet differentiability.
Let $\Bcal_1,\Bcal_2$ be Banach spaces, and consider the operator $\Phi\::\:\Bcal_1 \rightarrow \Bcal_2$
We say that $\Phi$ is \Frechet differentiable at $F \in \Bcal_1$ if there exists an open set $A\subset \Bcal_1$ containing $F$ and a linear operator $\mathcal{D}_F\::\:A \rightarrow \Bcal_2$ such that:
$$
\frac{\| \Phi(F+X)- \Phi(F) - \mathcal{D}_F (X) \|_{\Bcal_2}}{\|X\|_{\Bcal_1}} = o(1), \quad \text{as } \|X\|_{\Bcal_1}\to 0.
$$
In such case, we call $\mathcal{D}_F$ the \Frechet derivative or differential of $\Phi$ at $F$. Higher order derivatives are defined recursively.





\subsection{Optimal Transportation of Gaussian Measures}

\noindent The space $\CovSpace(\Hbb)$ of non-negative definite trace-class operators on $\Hbb$ can be identified with the space of all centred Gaussian measures on $\Hbb$. Indeed, if $X$ is zero-mean Gaussian process valued in $\Hbb$ satisfying $\mathbb{E}\|X\|^2<\infty$, we have $\Ebb [ X\otimes X ] = \Sigma$ and $\mathbb{E}\|X\|^2=\Ebb [ \tr ( X \otimes X) ]=\|\Sigma\|_1<\infty$. In light of this identification, it is natural to equip $\CovSpace$ with a metric, $\Pi$, borrowed from the theory of optimal mass transport (see \citet{villani2009optimal} for an extensive introduction). That is, for covariances $\Fcov, \Gcov$, we can consider Gaussian measures $\mu_{\Fcov} \equiv \Ncal(0,\Fcov)$ and $\nu_{\Gcov} \equiv \Ncal(0,\Gcov)$, and define a metric $\Pi$ on $\CovSpace$ via
\begin{equation}
    \label{eq : Bures--wasserstein distance}
    \Pi(\Fcov,\Gcov) :=  W_2(\mu_{\Fcov},\nu_{\Gcov}).
\end{equation}
Here, $W_2(\mu,\nu)$ denotes the 2-Wasserstein distance between the measures $\mu, \nu$:
\begin{equation}\label{eq : wasserstein distance}
    W_{2}(\mu, \nu)=\left( \inf _{\pi \in \Gamma(\mu, \nu)} \int_{\Hbb \times \Hbb}\|x-y\|^{2} \diff \pi(x, y)\right)^{\sfrac{1}{2}},
\end{equation}
where the infimum is taken over the set $\Gamma(\mu,\nu)$ of all couplings of $\mu$ and $\nu$. The optimisation problem appearing in \eqref{eq : wasserstein distance} is known as the Monge-Kantorovich problem of optimal transportation (\citet{kantorovich1942translocation}).
A simple compactness argument shows that the infimum in the Monge-Kantorovich problem is always attained by some coupling $\pi$, for any marginal pair of measures $\mu, \nu \in \Wscr_2(\Hbb)$. Moreover, when $\mu$ is  \emph{regular},
the optimal coupling is unique and given by a deterministic coupling, in the sense that there exists some deterministic map $T_{\mu}^{\nu}: \Hbb \rightarrow \Hbb$, called an optimal (transportation) map, for which $\pi=\left(\Id, T_{\mu}^{\nu}\right) \# \mu$ is a minimiser. {For a precise definition of a \emph{regular} measure $\mu$ on a general $\Hbb$, see \citet[Def. 6.2.2]{ambrosio2005gradient}. For instance, if $\Hbb = \mathbb{R}^d$, then a sufficient condition for existence of an optimal map is that $\mu$ does not give mass to $d-1$ dimensional sets. Regardless of $\Hbb$ being finite or infinite-dimensional, a Gaussian measure $\mu$ is \emph{regular} if and only if its covariance operator is strictly positive-definite. This explains why strictly positive-definite elements of $\CovSpace$ are equivalently called \emph{regular}.}

\bigskip

Gaussians have an exceptional place within Wasserstein spaces, as distances between Gaussians admit a closed form expression. Indeed, for $\mu \equiv \Ncal(0,\Fcov)$ and  $\nu \equiv \Ncal(0,\Gcov)$, we may express:
\begin{equation}\label{def : distance}
\Pi(\Fcov,\Gcov) =  W_2(\mu_{\Fcov},\nu_{\Gcov}) = \sqrt{ \tr(\Fcov)+\tr(\Gcov)-2\tr(\Gcov^{\sfrac{1}{2}}\Fcov\Gcov^{\sfrac{1}{2}})^{\sfrac{1}{2}}}
\end{equation}
See \citet{olkin1982distance} for a proof in finite dimensional spaces, and \citet{cuesta1996lower} in any separable Hilbert space. 
Furthermore, optimal maps also admit a closed-form representation: writing $T_{\Fcov}^{\Gcov}$ in lieu of the optimal transport map $T_{\mu}^{\nu}$, it may then be shown that:
\begin{equation}
    \label{eq : transport map}
    T_{\Fcov}^{\Gcov} := 
        \Fcov^{-\frac{1}{2}} (\Fcov^{\frac{1}{2}}\Gcov \Fcov^{\frac{1}{2}})^{\sfrac{1}{2}}\Fcov^{-\frac{1}{2}}.
\end{equation}
whenever such map exists. 
When $\Hbb=\mathbb{R}^{d}$ is finite-dimensional, invertibility of $\Fcov$ will guarantee the existence and uniqueness of a deterministic optimal coupling of $\mu \equiv \Ncal\left(0, \Fcov\right)$ of $\nu \equiv$ $\Ncal\left(0, \Gcov\right)$, induced by the linear transport map \eqref{eq : transport map}. The same formula turns out to be essentially valid in infinite dimensional Hilbert spaces, provided that the ranges of $\Fcov$ and $\Gcov$ are appropriately nested: when $\operatorname{ker}\left(\Fcov\right) \subseteq \operatorname{ker}\left(\Gcov\right)$, the map \eqref{eq : transport map} is well defined on a subspace of $\Hbb$ with $\mu$--measure 1 (see e.g. \citet[Proposition 2.2]{cuesta1996lower}). It is fairly straightforward to see that this subspace contains the range of $\Fcov^{\sfrac{1}{2}}$. However, the linear map $T_{\Fcov}^{\Gcov}$ is generally an unbounded operator and \textit{cannot} be extended to the whole of $\Hbb$.

\subsection{The Bures--Wasserstein Space}
The space of covariance operators $\CovSpace$ equipped with the metric $\Pi(\cdot,\cdot)$ is known in the literature as \textit{Bures--Wasserstein} space. 
The Bures--Wasserstein distance has appeared in the literature 
 in many different settings. In fact, it was observed by \citet{bhatia2019bures, masarotto2019procrustes}  that the Bures--Wasserstein metric coincides with the Procrustes metric for covariance operators, as considered in \citet{pigoli2014distances}:
\begin{equation}
\label{eq : Pi - procrustes equivalence}
     \Pi(\Fcov,\Gcov) = \inf_{U\::\:U^*U=\Id} \| \Fcov^{\sfrac{1}{2}} - \Gcov^{\sfrac{1}{2}}U\|_2,\qquad \Fcov,\Gcov\in\CovSpace.
\end{equation}
Moreover, whenever the transport map $T_\Fcov^\Gcov$ exists, it is easily seen that such distance allows for an equivalent expression in terms of transport maps:
\begin{equation}
\label{eq : Pi - in terms of maps}
   \Pi(\Fcov, \Gcov)^2 \: = \: \|(T_\Fcov^\Gcov-\Id)\Fcov^{\sfrac{1}{2}} \|_2^2 = \|(T_\Fcov^\Gcov-\Id)\Fcov (T_\Fcov^\Gcov-\Id) \|_1,
\end{equation}

In fact, the topology induced by $\Pi$ is equivalent to the topology generated by the trace norm distance, as well as the Hilbert-Schmidt distance on square roots, as summarised in the following lemma.

\begin{lemma}[Bures--Wasserstein topology]\label{lemma : topology}
Let $\Fcov,\Gcov \in \CovSpace$. 

\begin{enumerate}

    \item[1)] $ \| \Fcov^{\sfrac{1}{2}} - \Gcov^{\sfrac{1}{2}} \|_2^2\leq \| \Fcov-\Gcov\|_1 \leq (\tr(\Fcov)^{\sfrac{1}{2}} + \tr(\Gcov)^{\sfrac{1}{2}}) \Pi(\Fcov , \Gcov ) $

    \item[2)] $ \Pi(\Fcov , \Gcov ) \leq \| \Fcov^{\sfrac{1}{2}} - \Gcov^{\sfrac{1}{2}} \|_2 \leq  \| \Fcov - \Gcov \|_1^{\sfrac{1}{2}}  .$

\end{enumerate}
\end{lemma}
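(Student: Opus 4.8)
The plan is to dispatch three of the four inequalities quickly and concentrate on the remaining one. The bound $\Pi(\F,\G)\le\|\F^{1/2}-\G^{1/2}\|_2$ is immediate from the Procrustes representation \eqref{eq : BW - procrustes equivalence}, by inserting the (generally suboptimal) competitor $U=\Id$. The bound $\|\F^{1/2}-\G^{1/2}\|_2^2\le\|\F-\G\|_1$ --- which is the first inequality claimed in the lemma and, after taking square roots, also the last one --- is the Powers--St{\o}rmer inequality applied to the non-negative operators $\F^{1/2},\G^{1/2}$ (recall $(\F^{1/2})^2=\F$); I would either cite it, or give the short argument: letting $C=\F^{1/2}-\G^{1/2}$, $D=\F^{1/2}+\G^{1/2}$ and $V$ the self-adjoint sign of $C$ (so $\|V\|_\infty\le1$ and $VC=CV=|C|$), one has $\F-\G=\tfrac12(CD+DC)$, whence by cyclicity of the trace $\|C\|_2^2=\tr(|C|^2)\le\tr(|C|D)=\tr((\F-\G)V)\le\|\F-\G\|_1$, the middle inequality using $0\le|C|\le D$.

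The substantive bound is $\|\F-\G\|_1\le(\tr(\F)^{1/2}+\tr(\G)^{1/2})\,\Pi(\F,\G)$. I would first establish it under the assumption that $\F$ is regular, so that the optimal map $T:=T_\F^\G$ of \eqref{eq : transport map} exists, is self-adjoint, and satisfies $T\F T^*=\G$ (being the linear map pushing $\mu_\F$ forward to $\nu_\G$). Setting $S:=T\F^{1/2}$, formula \eqref{eq : BW - in terms of maps} gives $S\in\BB_2$, $SS^*=T\F T^*=\G$ (hence $\|S\|_2^2=\tr(\G)$), and $\|\F^{1/2}-S\|_2=\|(T-\Id)\F^{1/2}\|_2=\Pi(\F,\G)$. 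Then
\[
\F-\G=\F^{1/2}\F^{1/2}-SS^*=\F^{1/2}(\F^{1/2}-S^*)+(\F^{1/2}-S)S^*,
\]
so by the Schatten Cauchy--Schwarz inequality $\|XY\|_1\le\|X\|_2\|Y\|_2$,
\[
\|\F-\G\|_1\le\|\F^{1/2}\|_2\,\|\F^{1/2}-S^*\|_2+\|\F^{1/2}-S\|_2\,\|S^*\|_2=\bigl(\tr(\F)^{1/2}+\tr(\G)^{1/2}\bigr)\Pi(\F,\G),
\]
where I use $\|\F^{1/2}\|_2=\tr(\F)^{1/2}$, $\|S^*\|_2=\|S\|_2=\tr(\G)^{1/2}$, and $\|\F^{1/2}-S^*\|_2=\|(\F^{1/2}-S)^*\|_2=\|\F^{1/2}-S\|_2=\Pi(\F,\G)$, the penultimate identity holding because $\F^{1/2}$ is self-adjoint.

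Finally, to remove the regularity assumption, I would approximate $\F$ within $\KK$ by regular operators: fixing a CONS $\{e_i\}_{i\ge1}$ of $\H$, set $\F_m:=\F+m^{-1}\sum_{i\ge1}2^{-i}\,e_i\otimes e_i$, which lies in $\KK$, is strictly positive-definite (hence regular), and satisfies $\|\F_m-\F\|_1=m^{-1}\to0$. Applying the regular case to $(\F_m,\G)$ and sending $m\to\infty$ gives the claim for general $\F\in\KK$: indeed $\|\F_m-\G\|_1\to\|\F-\G\|_1$, $\tr(\F_m)\to\tr(\F)$, and $\Pi(\F_m,\G)\to\Pi(\F,\G)$ since $|\Pi(\F_m,\G)-\Pi(\F,\G)|\le\Pi(\F_m,\F)\le\|\F_m-\F\|_1^{1/2}\to0$ by the triangle inequality and the bound just proved. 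The main obstacle is precisely this last point: the algebraic identity above relies on an honest linear optimal map, which does not exist when $\ker\F\not\subseteq\ker\G$, and because $\H$ may be infinite-dimensional one cannot regularise by adding $\varepsilon\Id$ (which is not trace class); the summable perturbation $m^{-1}\sum_i 2^{-i}e_i\otimes e_i$ circumvents both difficulties. A routine point to check along the way is that, although $T$ may be an unbounded operator, all operators entering the estimate ($T\F^{1/2}$, $\F^{1/2}T$, $T\F T^*$) are genuinely Hilbert--Schmidt or trace class by \eqref{eq : BW - in terms of maps}.
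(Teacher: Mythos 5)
Your proposal is correct, and for the two inequalities the paper actually argues (both parts of 2), and the first inequality of 1)) you follow the same route the authors indicate: Powers--St{\o}rmer plus the Procrustes representation \eqref{eq : BW - procrustes equivalence} with the competitor $U=\Id$. The difference is that the paper disposes of the remaining, substantive bound $\|\F-\G\|_1\le(\tr(\F)^{1/2}+\tr(\G)^{1/2})\Pi(\F,\G)$ by citation (Proposition 4 of \citet{masarotto2019procrustes}), whereas you prove it; your argument via $S=T_\F^\G\F^{1/2}$, the factorization $AA^*-SS^*=A(A-S)^*+(A-S)S^*$ and H\"older is sound, and your regularization $\F_m=\F+m^{-1}\sum_i 2^{-i}e_i\otimes e_i$ correctly handles the case $\ker\F\not\subseteq\ker\G$ without circularity (the continuity of $\Pi$ you invoke rests only on part 2), which you prove independently). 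That said, the detour through the optimal map and the approximation step is avoidable: writing $\G=(\G^{1/2}U)(\G^{1/2}U)^*$ for an arbitrary unitary $U$ and applying the identical factorization with $A=\F^{1/2}$, $B=\G^{1/2}U$ gives $\|\F-\G\|_1\le(\tr(\F)^{1/2}+\tr(\G)^{1/2})\|\F^{1/2}-\G^{1/2}U\|_2$ directly for all $\F,\G\in\KK$, and taking the infimum over $U$ in \eqref{eq : BW - procrustes equivalence} finishes the proof with no regularity assumption. One small imprecision in your Powers--St{\o}rmer sketch: the step $\tr(|C|^2)\le\tr(|C|D)$ should not be justified by the operator inequality $|C|\preceq D$ (which need not hold; $-D\preceq C\preceq D$ does not imply $|C|\preceq D$), but rather by expanding in the eigenbasis $\{e_k\}$ of $C$ and using $\langle De_k,e_k\rangle\ge|\langle Ce_k,e_k\rangle|=|\lambda_k|$, which is all the trace computation requires.
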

\begin{proof}[Proof of Lemma~\ref{lemma : topology}]
The first inequality in {1)}, relating the trace norm and the Hilbert-Schmidt norms, is known as Powers–St{\o}rmer inequality: see \citet{Stormer1970}. The second inequality was proved in \citet[][Proposition 4]{masarotto2019procrustes}.
The second equation {2)} is straightforward consequence of the Powers–St{\o}rmer's inequality (\citet{Stormer1970}) and of \eqref{eq : Pi - procrustes equivalence}.
\end{proof} 

The above lemma readily shows that the space of covariance operators equipped with the metric $\Pi$ is \textit{complete}. In particular, the limit of a weakly convergent sequence of Gaussian measures is Gaussian, and the class of Gaussian measures is closed in the $2$-Wasserstein space. 

\bigskip

The Bures-Wasserstein space of covariance operators $(\CovSpace,\Pi)$ inherits the distinctive geometric structure of the Wasserstein space $\Wscr_2(\Hbb)$. When restricting the latter to the closed set of Gaussian measures in $\Wscr_2(\Hbb)$, one can define a formal \textit{tangent space} at any $\Sigma\in\CovSpace$ as
$$\Tan(\Sigma)=\overline{\{\Gamma: \Gamma=\Gamma^*\,\, \& \,\, \|\Sigma^{1/2}\Gamma\|_2<\infty\}}.$$
The closure is taken with respect to the corresponding norm $\|\Gamma\|_{\Tan(\Sigma)}:=\|\Sigma^{1/2}\Gamma\|_2$, which is a Hilbert norm. We refer to \citet{takatsu2010wasserstein} and \citet{masarotto2019procrustes}, in finite and infinite dimensions respectively.

\section{Random Covariance Operators and their \Frechet Means}
\label{sec : random cov}

 The notion of a \emph{mean} strongly depends on the metric of the ambient space. For normed spaces, the mean of a random element $\chi$ is defined by ``linear averaging", i.e. the Bochner integral: $\int \chi \diff P$, where $P$ denotes the law of $\chi$
(\citet[][Definition 7.2.1]{hsing2015theoretical}.
For random elements in general (non-linear) metric spaces, such as $(\CovSpace,\Pi)$, one cannot directly integrate, unless the non-linear space is embedded in larger Banach space. 
In the case of $\CovSpace$, this approach has unavoidable weaknesses, as it embeds covariances in a larger linear space in which they are not closed
under linear operations, potentially distorting the intrinsic geometric and topological properties of the original space.
Instead, one defines a \textit{\Frechet mean} -- first introduced by \citet{frechet1948elements} -- which proceeds indirectly, by the property of a mean as a centre of mass, a.k.a.\ \emph{a barycentre}. In our setting, let  $\Sigma$ be random element of $\CovSpace$, i.e. a measurable mapping from some probability space $(\Omega,\mathfrak{F},\Pbb)$ to the measurable space $\CovSpace$ equipped with the Borel $\sigma$-field generated by $\Pi$. A (population) \Frechet mean or (population) barycentre of $\bary$ is defined as any minimiser of the expected squared distance,
\begin{equation}
\label{eq : pop frechet mean}
    \Ebb [\, \Pi(\Sigma, \bary)^{2}\,] \leq \Ebb [\, \Pi(\Sigma, \Fcov)^{2}\,],\qquad \forall\, \Fcov\in\CovSpace.
\end{equation}
Given an i.i.d. collection $\Sigma_1,...,\Sigma_n$ of copies of $\Sigma$, an empirical \Frechet mean (or empirical barycentre) $\widehat{\bary}_n$ is also naturally defined as any minimiser of the empirical average
\begin{equation}\label{eq : sample frechet mean}
   \frac{1}{n}\sum_{i=1}^{n}\Pi(\Sigma_i, \widehat{\bary}_n)^{2} \leq \frac{1}{n}\sum_{i=1}^{n}\Pi(\Sigma_i,\Fcov)^{2},\qquad \forall\, \Fcov\in\CovSpace.
\end{equation}

Though \Frechet means have historically been primarily studied on Riemannian manifolds (see \citet{bhattacharya2003large}, \citet{afsari2011riemannian} among others), the generality of their definition allows for very broad use, and \Frechet means are increasingly becoming of interest in statistical problems involving Wasserstein spaces (see \citet{zemel2019frechet}, \citet{petersen2019frechet}, \citet{chen2021wasserstein} among others).

\medskip

The study of \Frechet means is typically plagued with geometrical complications. \Frechet means often fail to be unique, contrary to the ``linear" mean, and even existence is not guaranteed beyond the realm of locally compact spaces (\citet{kendall2011limit}).
In the $d$-dimensional 2-Wasserstein space $\Wscr(\Rbb^d)$, sufficient conditions for existence of empirical \Frechet means  were obtained the seminal work by \citet{agueh2011barycenters}. 
Specifically for the Bures--Wasserstein space (over $\Hbb=\mathbb{R}^d$), \citet{agueh2011barycenters} linked the empirical \Frechet mean to a certain matricial fixed point equation. \citet{masarotto2019procrustes} established existence and conditions for uniqueness for empirical \Frechet means in the Bures--Wasserstein space over a general separable Hilbert space $\Hbb$.   \citet{kroshnin2021statistical}, on the other hand established existence, uniqueness and regularity for population \Frechet means in the finite dimensional Bures--Wasserstein space ($\mathbb{R}^d$), under minimal conditions. Unfortunately, their proofs and arguments are intrinsically finite-dimensional, and do not generalise to the general case of an infinite dimensional Hilbert space $\Hbb$.

\medskip 

 To derive our results we will make use of some conditions on the law underlying the random covariance $\Sigma\in\CovSpace$, which involve both \emph{integrability} and \emph{regularity} aspects. 

\begin{enumerate}

    \item[\namedlabel{A1}{\textbf{A(1)}}]  $\Ebb\Big[ \|\Sigma\|_1\Big] < \infty. $

        \item[\namedlabel{A2}{\textbf{A(2)}}] $\Pbb\{\Sigma \succ 0 \} > 0$.
    \end{enumerate}

\noindent Assumption~\ref{A1} is the natural version of an $L^2$-type moment condition on $\CovSpace$, in light of the topology induced by $\Pi$. Assumption~\ref{A2} is a regularity condition, assigning positive probability to the event that $\Sigma$ be regular. When a \Frechet mean  $\bary$ of $\Sigma$ exists, 
we will make use of a second set of conditions involving a more refined control of the first moment and the regularity:

    \begin{enumerate}
    
    \item[\namedlabel{B1}{\textbf{B(1)}}]  
        $\Ebb \Big[ \left\| T_{\bary}^{\Sigma}\right\|_{\Tan_{\bary}} \Big] < \infty$.
    \end{enumerate}

    \begin{enumerate}
    
        \item[\namedlabel{B2}{\textbf{B(2)}}]     There exist  $c,C:\CovSpace\to (0,\infty)$ and a fixed covariance $R\in \CovSpacePos$ such that we have $$\Pbb\left( c(\Sigma)R \preceq \Sigma \preceq C(\Sigma)R\right) = 1$$ with $\Ebb[C(\Sigma)]<\infty.$

    \end{enumerate} 

\noindent We now comment on these assumptions in more detail.

Assumptions~\ref{A1} and~\ref{A2}  grant the existence and uniqueness of the \Frechet mean $\bary$, respectively, as will be shown in Theorem~\ref{thm : existence and uniqueness}. They are natural, in the sense of being the population analogues of the assumptions that 
grant existence and uniqueness of a sample \Frechet mean on a general $\Hbb$ (see \citet[Cor. 9 \& Prop. 10]{masarotto2019procrustes}) and for a population \Frechet mean when $\Hbb=\mathbb{R}^d$ (see \citet[][Theorem 2.1]{kroshnin2021statistical}).


When a \Frechet mean $\bary$ is well-defined, Assumption~\ref{B2} will be made use of to ensure that the unique barycenter is regular, and to obtain a Central Limit Theorem, in the form of Theorem~\ref{thm : clt}.

Assumption~\ref{B1} is a central first moment assumption on the tangent space of the \Frechet mean. It states that the mean absolute deviation of $\Sigma$ (interpreted on the tangent space, as the image of $\Sigma$ via the log map at $\bary$) is finite in the natural norm. 
 Note that~\ref{B1} is verified in the classical ``template deformation model" from geometrical statistics, which in our setting models $\Sigma$ as a random perturbation of a latent template, $$\Sigma = T\bary T$$ for $T\succeq 0$ a random optimal map satisfying $\mathbb{E}\|T\|_{\infty}<\infty$ and $\mathbb{E}[T]=I$.  
 
Assumption~\ref{B2} is a more refined regularity condition than~\ref{A2}. To see this, note that~\ref{B2} is equivalent to stating that the range of $\Sigma^{1/2}$ is invariant over the support of its law -- i.e. every realisation of $\Sigma$ generates the same Reproducing Kernel Hilbert Space (RKHS), almost surely. This further illustrates why~\ref{B2} can be seen as a regularity assumption. {Indeed, we will see that it is connected with a notion of compactness (see Lemma~\ref{lemma : charac of compact}) which implies regularity in the analytical sense in infinite dimensions}. When $\Hbb$ is finite dimensional,~\ref{B2} can be related to the notion of \emph{uniquely extendable geodesics} which is used by \citet{le2022fast} to obtain a rate of convergence for empirical barycentres.

Finally, we comment on some relationships between the conditions we have stated. 
 It is easy to see that~\ref{B2} implies assumption~\ref{A1}, and~\ref{A2}. In fact, when $\mathrm{dim}(\Hbb)=\infty$, the following holds:
\begin{lemma}
    If $\dim(\Hbb) < \infty$, then assumption~\ref{B2} holds if and only if
    \begin{equation}\label{eqn:E-R-equiv}
        \Ebb\Big[ \|\Sigma\|_1\Big] < \infty\qquad\textnormal{ and }\qquad \Pbb(\Sigma\succ 0) = 1.
    \end{equation}
\end{lemma}

\begin{proof}
    First, suppose that assumption~\ref{B2} holds, and let us show \eqref{eqn:E-R-equiv} by noting
    \begin{equation*}
        \Ebb\left[\Pi^2(\Sigma,0)\right] = \Ebb\left[\trace(\Sigma)\right] \le \Ebb\left[\trace(C(\Sigma)R)\right] = \Ebb\left[C(\Sigma)\right]\cdot\trace(R) < \infty
    \end{equation*}
    and also $P\{\Sigma: \Sigma\succ 0\} \ge P\{\Sigma: c(\Sigma) > 0\} = 1$.
    Second, let us suppose that \eqref{eqn:E-R-equiv} holds, and let us show~\ref{B2}.
    Since $\dim(\Hbb)<\infty$, we can set $R:=I$ and set $c(\Sigma)$ and $C(\Sigma)$ to be the smallest and largest eigenvalues of $\Sigma$, respectively.
    We of course have $\Pbb\left(c(\Sigma)I \preceq \Sigma \preceq C(\Sigma)I\right) = 1$ by construction, and we can compute:
    \begin{equation*}
        \Ebb\left[C(\Sigma)\right] \le \Ebb\left[\trace(\Sigma)\right],
    \end{equation*}
    and the right side is finite by assumption.
\end{proof}

Let us also comment on the link between~\ref{A1} and~\ref{B1}. Indeed,~\ref{B1} is \textit{weaker} in terms of moments, as it implies that $\int_{\CovSpace}\|\Sigma\|_1^{\sfrac{1}{2}}\diff P(\Sigma) <\infty$; however, it is \textit{stronger} in terms of regularity, since under~\ref{A1} the optimal map appearing in~\ref{B1} is not guaranteed to exist.

Finally, we note that~\ref{B1} is implied by~\ref{B2}. The proof is however not immediate, and we postpone it to Lemma~\ref{lemma:boundbothmaps}.

\subsection{Existence and Uniqueness}
\label{subsec existence uniqueness and characterisation of frechet means}
The following theorem provides sufficient conditions for existence and uniqueness of the \Frechet mean of a random covariance operator on a general separable Hilbert space, possibly infinite--dimensional. Interestingly, no additional assumptions are required compared to the analogous result in the finite-dimensional case (see the assumptions in \citet[][Theorem 2.1]{kroshnin2021statistical}), and so it is a genuine generalisation. The proof of the result is however distinctly different, and relies on classical functional analysis tools.

\begin{theorem}\label{thm : existence and uniqueness}
Let $\Sigma$ be random element of $\CovSpace(\Hbb)$ for $\Hbb$ a separable (possibly infinite dimensional) Hilbert space. A \Frechet mean of $\Sigma$ with respect to the Bures--Wasserstein distance $\Pi$ will exist if and only if $\Sigma$ satisfies~\ref{A1}. If $\Sigma$ additionally satisfies~\ref{A2},
the \Frechet mean is unique. 
\end{theorem}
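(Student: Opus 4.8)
The plan is to handle the three assertions separately: necessity of \ref{assumption_trbound}, existence under \ref{assumption_trbound}, and uniqueness under the additional \ref{assumption_posdef} — with essentially all the difficulty concentrated in the existence step, since in infinite dimensions the natural sublevel sets of the objective are no longer $\Pi$-compact. Write $\Psi(\F):=\E[\Pi(\Sigma,\F)^2]$. The triangle inequality for $\Pi$ and $(a+b)^2\le 2a^2+2b^2$ give $\Pi(\Sigma,\F)^2\le 2\tr(\Sigma)+2\tr(\F)$, so under \ref{assumption_trbound} the functional $\Psi$ is finite on all of $\KK$. Conversely, the reverse triangle inequality yields $\Pi(\Sigma,\F)\ge\sqrt{\tr\Sigma}-\sqrt{\tr\F}$, hence $\Pi(\Sigma,\F)^2\ge\tfrac14\tr\Sigma$ on the event $\{\tr\Sigma>4\tr\F\}$; if $\E\|\Sigma\|_1=\infty$ this forces $\Psi(\F)=\infty$ for every $\F\in\KK$, so no (finite-energy) barycentre exists. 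This proves the ``only if''.

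\textbf{Existence.} For existence I would run the direct method, but in the weak-$*$ topology on $\BB_1$, viewed as the dual of the (separable) space of compact operators on $\H$, where bounded sets are weak-$*$ compact by Banach--Alaoglu and, by separability of the predual, sequentially so. Given a minimizing sequence $(\F_n)$ with $\Psi(\F_n)\le\inf\Psi+1$, the coercive estimate $\E[(\sqrt{\tr\F_n}-\sqrt{\tr\Sigma})^2]\le\Psi(\F_n)$, read as a quadratic inequality in $\sqrt{\tr\F_n}$, bounds $\tr\F_n$ uniformly; passing to a subsequence, $\F_n\to\F_\star$ weak-$*$ with $\F_\star\succeq 0$ and $\tr\F_\star\le\liminf\tr\F_n$ (weak-$*$ lower semicontinuity of the trace on the positive cone). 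Writing $\Pi(\Sigma,\F)^2=\tr\Sigma+\tr\F-2\|\F^{\nicefrac{1}{2}}\Sigma^{\nicefrac{1}{2}}\|_1$ (using $\tr((\Sigma^{\nicefrac{1}{2}}\F\Sigma^{\nicefrac{1}{2}})^{\nicefrac{1}{2}})=\|\F^{\nicefrac{1}{2}}\Sigma^{\nicefrac{1}{2}}\|_1$), it remains to show that $\F\mapsto\E[\|\F^{\nicefrac{1}{2}}\Sigma^{\nicefrac{1}{2}}\|_1]$ is weak-$*$ \emph{continuous} on $\{\tr\F\le C\}$, which together with weak-$*$ lsc of $\tr\F$ gives $\Psi(\F_\star)\le\liminf\Psi(\F_n)$. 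For fixed trace-class (hence compact) $\Sigma$ I would prove this continuity by truncating the spectrum of $\Sigma^{\nicefrac{1}{2}}$: if $Q_N$ is the spectral projection onto its $N$ largest eigenvalues, then $\|\F^{\nicefrac{1}{2}}(\Id-Q_N)\Sigma^{\nicefrac{1}{2}}\|_1\le\sqrt{\tr\F}\,\|(\Id-Q_N)\Sigma^{\nicefrac{1}{2}}\|_2\to 0$ uniformly on $\{\tr\F\le C\}$, while $\|\F_n^{\nicefrac{1}{2}}Q_N\Sigma^{\nicefrac{1}{2}}\|_1=\tr((\Sigma^{\nicefrac{1}{2}}Q_N\F_nQ_N\Sigma^{\nicefrac{1}{2}})^{\nicefrac{1}{2}})$ converges because the operators $\Sigma^{\nicefrac{1}{2}}Q_N\F_nQ_N\Sigma^{\nicefrac{1}{2}}$ have rank $\le N$, their matrices in the finite eigenbasis of $Q_N$ converge entrywise under weak-$*$ convergence of $\F_n$, hence they converge in trace norm and their finitely many eigenvalues converge. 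The expectation is then passed through by dominated convergence, since $\|\F^{\nicefrac{1}{2}}\Sigma^{\nicefrac{1}{2}}\|_1\le\sqrt{C}\,\sqrt{\tr\Sigma}$ with $\E\sqrt{\tr\Sigma}<\infty$ by \ref{assumption_trbound}. The delicate point is precisely this weak-$*$ continuity of the ``cross'' functional on trace-bounded sets, where compactness of $\Sigma^{\nicefrac{1}{2}}$ is indispensable and the finite-dimensional argument (which just invokes compactness of $\{\tr\F\le C\}$) has no counterpart; a secondary care point is the interchange of the weak-$*$ limit with the expectation, which is the second place \ref{assumption_trbound} is used.

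\textbf{Uniqueness.} The observation for uniqueness is that, despite the geometric nonlinearity of $(\KK,\Pi)$, the map $\F\mapsto\Pi(\Sigma,\F)^2$ is convex in the \emph{ordinary linear structure} of operators restricted to the convex cone $\KK$: $\tr\F$ is affine, and $\F\mapsto\tr((\Sigma^{\nicefrac{1}{2}}\F\Sigma^{\nicefrac{1}{2}})^{\nicefrac{1}{2}})$ is concave, being the composition of the linear map $\F\mapsto\Sigma^{\nicefrac{1}{2}}\F\Sigma^{\nicefrac{1}{2}}$ with the concave functional $A\mapsto\tr(A^{\nicefrac{1}{2}})$ on $\KK$. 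When $\Sigma$ is regular this composition is \emph{strictly} concave: the linear map is then injective, and $A\mapsto\tr(A^{\nicefrac{1}{2}})$ is strictly concave on $\KK$ (from strict concavity of $x\mapsto\sqrt{x}$ together with the majorization $\lambda(A+B)\prec\lambda(A)+\lambda(B)$ between eigenvalue sequences, which degenerates to equality only when $A=B$). Consequently, under \ref{assumption_posdef}, for any $\F_0\ne\F_1$ in $\KK$ the midpoint satisfies $\Psi\big(\tfrac12(\F_0+\F_1)\big)<\tfrac12\Psi(\F_0)+\tfrac12\Psi(\F_1)$, because the integrand is strictly convex on the positive-probability event $\{\Sigma\succ 0\}$ and convex elsewhere; hence $\Psi$ admits at most one minimiser on $\KK$, and combined with the existence step this yields the unique barycentre $\Xi$.
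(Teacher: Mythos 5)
Your proof is correct, but it reaches the conclusion by a genuinely different route than the paper. For existence, the paper establishes convexity of $\Psi$ from the weak convexity of $\Pi^2$ (citing Masarotto et al.), proves coercivity via the Procrustes representation, extracts a weakly convergent subsequence of the \emph{square roots} $F_n^{\nicefrac{1}{2}}$ in the Hilbert--Schmidt ball, and then invokes the general principle that a convex continuous functional is weakly sequentially lower semicontinuous. You instead run the direct method in the weak-$*$ topology of $\BB_1$ as the dual of the compacts, and --- rather than appealing to convexity at all --- prove lower semicontinuity of $\Psi$ by hand from the closed form $\Pi(\Sigma,F)^2=\tr\Sigma+\tr F-2\|F^{\nicefrac{1}{2}}\Sigma^{\nicefrac{1}{2}}\|_1$: weak-$*$ lsc of the trace on the positive cone plus weak-$*$ continuity of the cross term on trace-bounded sets, the latter obtained by spectral truncation of $\Sigma^{\nicefrac{1}{2}}$ (where compactness of $\Sigma$ does the work that norm-compactness of sublevel sets does in $\mathbb{R}^d$) and dominated convergence. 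What your approach buys is self-containment and an explicit identification of exactly where lower semicontinuity comes from; what the paper's buys is brevity, at the price of leaning on the convexity--continuity--weak-lsc chain (which, as written, is applied to $G\mapsto\Psi(G^2)$ on $\BB_2$ and so deserves a word of justification). You also supply the necessity of \ref{assumption_trbound} explicitly via the reverse triangle inequality, which the paper asserts but does not prove. For uniqueness, both arguments reduce to strict convexity of $F\mapsto\Pi^2(F,\Sigma)$ on the positive-probability event $\{\Sigma\succ0\}$; the paper asserts the strict inequality (citing the convexity result of Masarotto et al.), whereas you derive it from strict concavity of $A\mapsto\tr(A^{\nicefrac{1}{2}})$ together with injectivity of $F\mapsto\Sigma^{\nicefrac{1}{2}}F\Sigma^{\nicefrac{1}{2}}$ for regular $\Sigma$, which is more transparent.

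One small point to tighten: your parenthetical justification of strict concavity of $A\mapsto\tr(A^{\nicefrac{1}{2}})$ via the majorization $\lambda(A+B)\prec\lambda(A)+\lambda(B)$ is not quite right as stated, since that majorization can hold with equality of the sequences for commuting $A\neq B$. The correct equality analysis goes through the Peierls--Jensen trace inequality: diagonalise the midpoint, use pointwise strict concavity of $\sqrt{\cdot}$ on the diagonal entries, and observe that equality forces a common eigenbasis with equal diagonal entries, hence $A=B$. The conclusion you need is true; only the cited mechanism should be replaced.
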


\begin{proof}[Proof of Theorem~\ref{thm : existence and uniqueness}]

Denote by $\Psi(F)=\mathbb{E} [\Pi(\Sigma, F)^{2}]$ the \Frechet functional.
The squared Wasserstein distance is weakly convex along generalised geodesics, as shown in the proof of \citep[][Proposition 10]{masarotto2019procrustes}. Indeed:
\begin{equation}
\label{eq:convexityPidist}    
\Pi^2(\lambda F_1 + (1-\lambda)F_2,\Sigma) \leq \lambda \Pi^2(F_1,\Sigma) + (1-\lambda)\Pi^2(F_2,\Sigma), \qquad \lambda\in[0,1], F_1,F_2\in\CovSpace
\end{equation}
and consequently $\Psi(\cdot)$ is convex along generalised geodesics as well, provided all terms involved are a.s. finite. This is guaranteed by~\ref{A1}.

Secondly, note that $\Psi(\cdot)$ is coercive.
 Indeed, first observe that $\Psi(0) = \Ebb\tr(\Sigma)<\infty$. Then, we may see that if $\{F_n\}_{n \geq 1}$ is a sequence of operators that diverges in trace, i.e.\ $\tr(F_n)\rightarrow\infty$, necessarily $\Psi(F_n) \rightarrow \infty$. Indeed, by \eqref{eq : Pi - procrustes equivalence}, for every $n$ there exists a unitary operator $U_n$ such that $\Pi (F_n,\Sigma) = \|F_n^{\sfrac{1}{2}} - U_n\Sigma^{\sfrac{1}{2}}\|_2$, and consequently:
 $$
\Psi(F) \geq \Ebb \lvert \|F_n^{\sfrac{1}{2}}\|_2 - \|U_n\Sigma^{\sfrac{1}{2}} \|_2 \rvert ^2
\geq \Ebb  (  \|F_n^{\sfrac{1}{2}}\|_2 - \|\Sigma^{\sfrac{1}{2}} \|_2 )^2 
=
\Ebb  (  \|F_n\|^{\sfrac{1}{2}}_1 - \|\Sigma \|^{\sfrac{1}{2}}_1 )^2 
 $$
which clearly diverges. Note that we have used that $\|U_n \Sigma\|_1\leq \|U_n\|_\infty\|\Sigma\|_1$ and  $\|U_n\|_\infty = 1$.
This shows that any minimising sequence $\{F_n\}_{n\geq 1}$ is bounded in trace norm. In particular, for any sequence of non-negative definite operators
 $\|F_n\|_1 \rightarrow \infty \Leftrightarrow\|F_n^{\sfrac{1}{2}}\|_2 \rightarrow \infty $,
 by which we obtain that any minimising sequence $\{F_n^{\sfrac{1}{2}}\}_{n\geq 1}$ is bounded in the space $\Bscr_2$ of Hilbert-Schmidt operators. Moreover, since closed balls in $\Bscr_2$ are weakly compact, there exists a sub-sequence $\{F^{\sfrac{1}{2}}_{n_k}\}_{k\geq 1}$ weakly-converging the to some limit $F^{\sfrac{1}{2}}$, which need be non-negative, self-adjoint and Hilbert-Schmidt. Finally, since $\Psi(\cdot)$ is convex and continuous, it is (sequentially) weakly l.s.c.\ (see \citet[][Corollary 3.9 ]{brezis2011functional}), and thus
$$
\inf_{\omega\in\CovSpace} \Psi(\omega) 
= \lim_{k\rightarrow\infty} \Psi((F^{\sfrac{1}{2}}_{n_k})^2) \geq \Psi(F)
$$
 proving that $F$ is a minimum, i.e.\ a \Frechet mean, which exists.

\medskip

Next, we show that whenever~\ref{A2} is satisfied, the \Frechet functional is \textit{strictly convex}. 
\begin{align*}
    \Ebb &\left[\Pi^2(\lambda  F_1 + (1-\lambda)F_2,\Sigma)\right]= \\
     &\quad=\: \Ebb\left[ \Pi^2(\lambda F_1 + (1-\lambda)F_2,\Sigma) \:|\: {\Sigma \succ 0} \right]\Pbb({\Sigma \succ 0} ) 
    \\ &\quad\qquad\qquad+ \Ebb\left[ \Pi^2(\lambda F_1 + (1-\lambda)F_2,\Sigma) \:|\: {\Sigma \nsucc 0} \right]\Pbb({\Sigma \nsucc 0} ) \\
      &\quad< \: \Ebb\left[ \lambda \Pi^2(F_1, \Sigma) + (1-\lambda)\Pi(F_2,\Sigma) \:|\: {\Sigma \succ 0} \right]\Pbb({\Sigma \succ 0} ) \\ &\qquad\qquad + \Ebb\left[ \lambda \Pi^2(F_1, \Sigma) + (1-\lambda)\Pi(F_2,\Sigma) \:|\: {\Sigma \nsucc 0} \right]\Pbb({\Sigma \nsucc 0} )\\
     &\quad=  \: \lambda \Ebb\left[\Pi^2(F_1,\Sigma)\right] + (1-\lambda)\Ebb\left[\Pi^2(F_2,\Sigma)\right],
\end{align*}
where we have used that \eqref{eq:convexityPidist}, that the Bures-Wasserstein distance from a \textit{positive}, fixed covariance is strictly convex, and that $\Pbb(\Sigma \succ 0)>0$. This shows \textit{strict} convexity of $\Psi(\cdot)$, and hence uniqueness of the \Frechet minimiser. 
\end{proof}

Note that~\ref{A1} is necessary for the well definition of the \Frechet functional.
Assumption~\ref{A2} is \emph{not} necessary for uniqueness, as can be verified by the following simple example in finite-dimensions: take $\Hbb=\mathbb{R}^2$ and let 
$$\Sigma=\left(\begin{array}{cc}W & 0 \\0 & 0\end{array}\right),\quad W\sim \chi^2_1.$$ 
For this reason, we will henceforth directly assume uniqueness/existence wherever required, without explicitly invoking~\ref{A2}, unless we specifically need to. However, when not satisfied, the barycentre can indeed fail to be unique, as shown in \citet{PennecGeod}.

\subsection{Law of Large Numbers}\label{subsec : estimation bary}

Recall that an empirical or sample \Frechet mean of $\Sigma_{1}, \cdots, \Sigma_{n}$ is a minimiser of the empirical \Frechet functional, see \eqref{eq : sample frechet mean}. Such a mean always exists and is furthermore unique, provided one of the $\Sigma_i$ is regular (see \citet[][Corollary 9]{masarotto2019procrustes}). An important line of enquiry is the (strong) consistency of   empirical \Frechet means for their population counterparts, as the sample size diverges. Equivalently, establishing a strong law of large numbers in the metric space $(\CovSpace,\Pi)$.  In view of the results by \citet{ziezold1977expected}, if a population mean exists and the sequence of empirical means converge, then the limit must be the population mean, under uniqueness. Therefore, the main step for obtaining consistency  amounts to showing that, with probability 1, a sequence of empirical barycentres $\{\empbary\}_{n\geq 1}$ is precompact, with population \Frechet means as limit points.

\medskip

{We do this in Theorem~\ref{thm : consistency of Frechet mean} using the following characterisation of compact subsets of the Bures--Wasserstein space, which may be of interest in its own right. }

\begin{lemma}\label{lemma : charac of compact}
Let $ \Fcal\subset \CovSpace$ be  tight. The set:
\begin{equation}\label{eq : charac compact set}
  \mathfrak{D}_{\Fcal} := \{Q\in \CovSpace \;:\; 0 \preceq Q\preceq F \text{ for some } F\in\Fcal\}
\end{equation}
is compact in the topology of $\Pi$ (equivalently, in the trace-norm topology).
\end{lemma}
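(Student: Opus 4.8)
The plan is to show that $\mathfrak{U}_{\F}$ is sequentially compact in the trace-norm topology, which by Lemma \ref{lemma : topology} coincides with the $\Pi$-topology. Since $(\KK,\Pi)$ is complete and $\mathfrak{U}_{\F}$ is $\Pi$-closed (the condition $0 \preceq Q \preceq \F$ is preserved under trace-norm limits, because $\langle Q h, h\rangle \le \langle \F h, h\rangle$ is a closed condition pointwise in $h$, and trace-norm convergence implies entrywise convergence of quadratic forms), it suffices to prove total boundedness, or equivalently that every sequence $\{Q_n\} \subset \mathfrak{U}_{\F}$ has a trace-norm convergent subsequence. First I would extract a weakly operator convergent subsequence: since $0 \preceq Q_n \preceq \F$ implies $\|Q_n\|_\infty \le \|\F\|_\infty$, the sequence is bounded in $\BB$, and by a diagonal argument over a CONS one obtains a subsequence (not relabelled) and a limit operator $Q$ with $\langle Q_n h, h'\rangle \to \langle Q h, h'\rangle$ for all $h,h'$; one checks $0 \preceq Q \preceq \F$, so in particular $Q \in \KK$. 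The task is then to upgrade weak operator convergence to trace-norm convergence.

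The key mechanism is the domination $Q_n \preceq \F$ together with the fact that $\F$ is trace-class, which forces a uniform tail estimate. Concretely, fix a CONS $\{e_i\}$ diagonalising $\F$ (or simply any CONS), and note $\sum_{i > N} \langle Q_n e_i, e_i\rangle \le \sum_{i>N} \langle \F e_i, e_i\rangle =: \rho_N$, with $\rho_N \to 0$ as $N \to \infty$, uniformly in $n$. This is the ``ordered Heine–Borel'' phenomenon: the order interval is squeezed into a set with uniformly summable spectral tails. Using this, I would argue that $\tr(Q_n) = \sum_i \langle Q_n e_i, e_i\rangle \to \sum_i \langle Q e_i, e_i\rangle = \tr(Q)$ (split into a finite head, which converges by weak operator convergence, and a tail bounded by $\rho_N$ uniformly). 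Convergence of the traces together with $Q_n \succeq 0$, $Q_n \to Q$ in the weak operator topology is exactly the hypothesis for a Hilbert–space analogue of Scheffé's lemma for non-negative trace-class operators: weak operator convergence plus convergence of traces (the ``masses'') implies trace-norm convergence. I would either invoke this directly or prove it: writing $\|Q_n - Q\|_1$ and using that for non-negative operators one can control the trace norm via the trace after diagonalisation, splitting again into head and tail and using $\rho_N$ for the tail of $Q_n$ and summability of the spectrum of $Q$ for the tail of $Q$.

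The main obstacle is precisely this last upgrade — weak operator convergence is far too weak on its own (closed balls of $\BB$ are weakly compact but not norm compact), so everything hinges on genuinely exploiting $Q_n \preceq \F$ to get the uniform tail control $\rho_N \to 0$, and then on the Scheffé-type argument that converts weak convergence plus mass convergence into strong (trace-norm) convergence for non-negative operators. I would also need the small technical point that $Q \preceq \F$ passes to the weak operator limit (immediate from pointwise convergence of quadratic forms) so that $Q \in \mathfrak{U}_{\F}$ and the limit stays in the set. Once trace-norm convergence $Q_n \to Q$ is established, Lemma \ref{lemma : topology} gives $\Pi(Q_n, Q) \to 0$, completing the proof that $\mathfrak{U}_{\F}$ is (sequentially, hence) compact.
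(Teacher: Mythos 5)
Your proposal is correct, but it takes a genuinely different route from the paper. The paper works on the measure side: it identifies $\mathfrak{U}_{\F}$ with the family of centred Gaussian measures $\{\N(0,Q): Q\preceq \F\}$, proves tightness of that family via a flat-concentration criterion (\citet[][Theorem 7.7.4]{hsing2015theoretical}), invokes Prokhorov's theorem, and then uses the fact that for Gaussian measures weak convergence within this dominated family upgrades to $W_2$-convergence. You stay entirely on the operator side: extract a WOT-convergent subsequence from the operator-norm-bounded set, then upgrade WOT convergence to trace-norm convergence using the uniform tail bound $\sum_{i>N}\langle Q_n e_i,e_i\rangle\le\sum_{i>N}\langle \F e_i,e_i\rangle\to 0$ together with a noncommutative Scheff\'e-type lemma (positive trace-class operators converging in WOT with converging traces converge in trace norm). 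Both proofs hinge on exactly the same core estimate --- domination by the trace-class $\F$ forces uniformly summable spectral tails, which is the ``ordered Heine--Borel'' mechanism --- the paper packages it as tightness, you package it as uniform tail control. Your route is arguably more self-contained: it avoids Prokhorov and the (slightly glossed-over in the paper) step that weak convergence of Gaussians implies Wasserstein convergence, at the cost of either citing the Scheff\'e-type result or carrying out the head/tail splitting
\begin{equation*}
\|Q_n-Q\|_1\le \|P_N(Q_n-Q)P_N\|_1+\|(\Id-P_N)(Q_n-Q)\|_1+\|P_N(Q_n-Q)(\Id-P_N)\|_1,
\end{equation*}
where the off-head blocks are bounded via Cauchy--Schwarz by $\bigl(\sum_{i>N}\langle\F e_i,e_i\rangle\bigr)^{1/2}\tr(\F)^{1/2}$ uniformly in $n$, and the finite-rank head converges entrywise. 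One small remark: with this splitting in hand, the intermediate step of separately establishing $\tr(Q_n)\to\tr(Q)$ is not actually needed --- the same decomposition delivers trace-norm convergence directly --- so that part of your plan is redundant rather than wrong.
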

Note that the lemma in particular implies that for any fixed covariance $F\in\CovSpace$,  the set
$
\{Q\in \CovSpace \;:\; 0 \preceq Q\preceq F\},
$
which can be thought of as ``closed and bounded interval" in the partial order relation ``$\preceq$", is compact in the $\Pi$-topology, establishing a sort of ordered Heine-Borel property for the Bures-Wasserstein space.
Also note that a converse statement is clearly true, and hence that Lemma~\ref{eq : charac compact set} provides in fact a characterisation of compact sets in $(\CovSpace,\Pi)$.

\begin{proof}[Proof of Lemma~\ref{lemma : charac of compact}]
Employing the duality between covariance operators and centred Gaussian measures, its clear that the set $\mathfrak{D}_{\Fcal}$ in \eqref{eq : charac compact set} is compact if and only if the corresponding family of Gaussian measures
\begin{equation}\label{eq : family of gaussian measures}
    \{\mu_{Q}\equiv \Ncal(0,Q)\;:\; Q\in \mathfrak{D}_{\Fcal}\}
\end{equation} 
is tight. 
Indeed,  tightness of \eqref{eq : family of gaussian measures} implies -- via Prokhorov's theorem -- the (sequential) relative compactness of the family of measures \eqref{eq : family of gaussian measures}  with respect to the topology of weak convergence; since all measures in \eqref{eq : family of gaussian measures} are Gaussian, this is in turn equivalent to compactness with respect to the topology induced by the Wasserstein distance, i.e. to the compactness of \eqref{eq : charac compact set} with respect to the Bures--Wasserstein distance $\Pi(\cdot,\cdot)$. 

Tightness of \eqref{eq : family of gaussian measures} can be established by proving its \textit{flat concentration}, which in turn can be shown employing the criterion in \citet[][Theorem 7.7.4]{hsing2015theoretical}, i.e. proving that for any $\varepsilon,\delta>0$ there exists $k>0$ and a finite subset $\{h_1,\dots,h_k\}\subset \Hbb$ such that:
\begin{enumerate}
    \item[(i)] $\inf\limits_{Q\in\mathfrak{D}_{\Fcal}} \mu_Q(E_k^{(\varepsilon)})\geq 1-\delta$, where $E_k= \mathrm{span}(\{h_1,\dots,h_k\})$ and $E_k^{(\varepsilon)} = E_k + B(0,\varepsilon)$.
    \item[(ii)] $\inf\limits_{Q\in\mathfrak{D}_{\Fcal}} \mu_Q(\{h \in \Hbb \;:\; \lvert \langle h,h_j \rangle \rvert \leq r, j=1,\dots,k \}) \geq 1-\delta$ for some $r>0$.
\end{enumerate}

We will use that, by tightness, for all $\gamma>0$ there exists $k\geq 1$ and $r>0$ such that:
\begin{align*}
    \sup_{F\in\Fcal}\sum_{j\geq k} \langle F h_j, h_j \rangle < \gamma,
    \qquad\text{and}\qquad
    \sup_{F\in\Fcal}\frac{1}{r}{\tr(F)}<\gamma.
\end{align*}
For a proof, we refer to \citet[][Example 3.8.13(iv)]{bogachev1998gaussian}.

Let us now prove (i). Let $\{h_i\}_{i\geq 1}$ be a CONS for $\Hbb$ of eigenfunctions for $F$, with $\{\lambda_i\}_{i\geq 1}$ the corresponding set of eigenvalues. Denote by $E_k= \mathrm{span}(\{h_1,\dots,h_k\})$. Let $X$ be an $\Hbb$-valued random variable, $X\sim \mu_Q$ for some $Q\in \mathfrak{D}_{\Fcal}$. Denote by $X^{(k)}$ the projection of $X$ onto $E_k$. Then:
$$
X = X^{(k)} + (X-X^{(k)} ) \in E_k^{(\|X-X^{(k)} \|)},
$$
where $E_k^{(\|X-X^{(k)} \|)} = \{h\in\Hbb \::\: \inf_{u\in E_k} \|h-u\| \leq  \|X-X^{(k)}\|\}$.
Then, note that:
\begin{align*}
    \Pbb ( \| X-X^{(k)} \|^2 > \varepsilon^2 ) \:
    \leq & \: \frac{\Ebb \left[ \sum_{j\geq k} | \langle X, h_j\rangle | ^2 \right] }{\varepsilon^2} \\
    = & \: \frac{1}{\varepsilon^2} \sum_{j\geq k} \langle Q h_j, h_j \rangle \\
    \leq & \: \frac{1}{\varepsilon^2} \sup_{F\in\Fcal}\sum_{j\geq k} \langle F h_j, h_j \rangle \\
\end{align*}
and, by tightness, the upper bound can be made arbitrarily small uniformly, choosing $k$ sufficiently large..
 Therefore, we find that for any $Q\in \mathfrak{D}_{\Fcal}$ and $\varepsilon,\delta>0$, for $k$ large enough: 
$$
\mu_Q(E_k^{(\varepsilon)}) = \Pbb(X \in E_k^{(\varepsilon)} ) = \Pbb ( \| X-X^{(k)} \|^2 \leq \varepsilon^2 ) \geq 1- \delta,
$$
which proves (i).

Next, let us show (ii). 
\begin{align*}
\mu_Q(\{h \in \Hbb \;:\; \lvert \langle h,h_j \rangle \rvert \leq r, j=1,\dots,k \}) \:
    & =  \:\Pbb( \lvert \langle X,h_j \rangle \rvert \leq r, j=1,\dots,k ) \\
    & =  \: 1 - \sum_{j=1}^{k} \Pbb( \lvert \langle X,h_j \rangle \rvert \geq r ) \\
    & \geq  \: 1 - \frac{1}{r^2} \sum_{j\geq 1} \Ebb  \lvert \langle X,h_j \rangle \rvert^2 \\
    & = \: 1 - \frac{\tr (Q)}{r^2}  \\
    &\geq \: 1 -  \sup_{F\in\Fcal}\frac{\tr(F)}{r^2} 
\end{align*}
which again, by tightness, can be made arbitrarily close to $1$ uniformly,  proving (ii). Therefore, the family of Gaussian measures \eqref{eq : family of gaussian measures} is tight, and the corresponding set of covariance operators in \eqref{eq : charac compact set} is hence compact.
\end{proof}



\noindent We can now state and prove the strong law of large numbers in the Bures--Wasserstein space: 

\begin{theorem}\label{thm : consistency of Frechet mean}
Let $\Sigma_1,\ldots,\Sigma_n$ be i.i.d. copies of a random element  $\Sigma$ in $\CovSpace(\Hbb)$ satisfying~\ref{A1}, for $\Hbb$ a separable (possibly infinite dimensional) Hilbert space. Then, with probability 1, any sequence $\{\empbary\}_{n\geq 1}$ of empirical \Frechet means is relatively compact in $(\CovSpace,\Pi)$, and any limit point thereof is a \Frechet mean $\bary$ of $\Sigma$.
\end{theorem}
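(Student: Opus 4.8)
The plan is to run the classical \citet{ziezold1977expected} strategy for consistency of \Frechet means, while coping with the failure of local compactness in $(\KK,\Pi)$ by confining the empirical barycentres to one of the compact ``order intervals'' $\mathfrak{U}_{\F}$ of Lemma \ref{lemma : charac of compact}. Write $\Psi_n(F)=\frac1n\sum_{i=1}^{n}\Pi(\Sigma_i,F)^2$ and $\Psi(F)=\E[\Pi(\Sigma,F)^2]$. Since $\Pi(\Sigma,F)^2\le(\tr(\Sigma)^{1/2}+\tr(F)^{1/2})^2$, Assumption \ref{assumption_trbound} renders $\Psi$ finite, so the scalar strong law gives $\Psi_n(F)\to\Psi(F)$ almost surely for each fixed $F\in\KK$; fixing a countable $\Pi$-dense set $\mathcal D\subseteq\KK$ we obtain, on a single almost sure event, simultaneous convergence over all $F\in\mathcal D$. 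The triangle inequality for $\Pi$ together with \ref{assumption_trbound} also makes $\Psi$ locally Lipschitz, hence continuous, and a population \Frechet mean exists by Theorem \ref{thm : existence and uniqueness}.

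The decisive point is that $\{\widehat\Xi_n\}_{n\ge1}$ lies almost surely inside a single compact set $\mathfrak{U}_{\F}$. For this I would use the fixed-point identity \eqref{eq : sample fix point equation}, valid for every empirical barycentre: $\widehat\Xi_n=\frac1n\sum_i(\widehat\Xi_n^{1/2}\Sigma_i\widehat\Xi_n^{1/2})^{1/2}$. Introducing $D_i:=\widehat\Xi_n^{-1/2}(\widehat\Xi_n^{1/2}\Sigma_i\widehat\Xi_n^{1/2})^{1/2}=T_{\widehat\Xi_n}^{\Sigma_i}\widehat\Xi_n^{1/2}$ on $\overline{\range(\widehat\Xi_n)}$, where the Gaussian $\mu_{\widehat\Xi_n}$ is supported, one checks $D_iD_i^{*}=\Sigma_i$ and $\bar D:=\frac1n\sum_i D_i=\widehat\Xi_n^{1/2}$, so that the elementary identity $\bar D\bar D^{*}=\frac1n\sum_i D_iD_i^{*}-\frac1n\sum_i(D_i-\bar D)(D_i-\bar D)^{*}$ yields
\[
0\;\preceq\;\widehat\Xi_n\;=\;\bar D\bar D^{*}\;\preceq\;\frac1n\sum_{i=1}^{n} D_iD_i^{*}\;=\;\frac1n\sum_{i=1}^{n}\Sigma_i\;=:\;\overline\Sigma_n .
\]
By the Banach-space strong law in $(\BB_1,\|\cdot\|_1)$ (legitimate under \ref{assumption_trbound}, $\BB_1$ being separable) we have $\overline\Sigma_n\to\E[\Sigma]\in\KK$ almost surely in trace norm, hence $\{\overline\Sigma_n\}$ converges in $(\KK,\Pi)$ by Lemma \ref{lemma : topology}; Corollary \ref{cor : convergent implies bounded} then furnishes a random $\F\in\KK$ with $\overline\Sigma_n\preceq\F$ for all $n$. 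Therefore $\{\widehat\Xi_n\}_{n\ge1}\subseteq\mathfrak{U}_{\F}$, which is $\Pi$-compact by Lemma \ref{lemma : charac of compact}; in particular $\{\widehat\Xi_n\}$ is almost surely relatively compact.

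It remains to identify the limit points. Enlarging the almost sure event, I would invoke the uniform strong law $\sup_{G\in\mathfrak{U}_{\F}}|\Psi_n(G)-\Psi(G)|\to0$ --- a Glivenko--Cantelli-type fact, valid because $\mathfrak{U}_{\F}$ is compact, $G\mapsto\Pi(\Sigma,G)^2$ is continuous, and $\sup_{G\in\mathfrak{U}_{\F}}\Pi(\Sigma,G)^2\le(\tr(\Sigma)^{1/2}+\tr(\F)^{1/2})^2$ is integrable. If $\widehat\Xi_{n_k}\to\Xi_\infty$ along a subsequence, then $\Xi_\infty\in\mathfrak{U}_{\F}$ and $\Psi_{n_k}(\widehat\Xi_{n_k})\to\Psi(\Xi_\infty)$ by the uniform law and continuity of $\Psi$, while $\Psi_{n_k}(\widehat\Xi_{n_k})\le\Psi_{n_k}(F)\to\Psi(F)$ for each $F\in\mathcal D$; hence $\Psi(\Xi_\infty)\le\Psi(F)$ for all $F\in\mathcal D$, and by density and continuity for all $F\in\KK$, so $\Xi_\infty$ is a population \Frechet mean of $\Sigma$. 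Since every accumulation point of $\{\widehat\Xi_n\}$ arises this way, the proof is complete.

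The genuine obstacle is the domination $\widehat\Xi_n\preceq\overline\Sigma_n$. The operators $D_i$ are built from the optimal maps $T_{\widehat\Xi_n}^{\Sigma_i}$, which are generically \emph{unbounded}, and the identities $D_iD_i^{*}=\Sigma_i$ and $\sum_i D_i=n\widehat\Xi_n^{1/2}$ are only literally meaningful on the dense subspace $\range(\widehat\Xi_n^{1/2})\subseteq\overline{\range(\widehat\Xi_n)}$; one must verify that the operator inequality established there extends to all of $\H$ (which is legitimate, since $\widehat\Xi_n$ and $\overline\Sigma_n$ are bounded) and that $\ker(\widehat\Xi_n)$ plays no role. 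A safe route around the unboundedness is to prove the whole chain first for the genuinely regular perturbations $\Sigma_i+\delta\mathcal R$ (with $\mathcal R\in\KK$ regular), whose optimal maps are bounded, and then let $\delta\downarrow0$, using continuity of the barycentre map and of $\Pi$.
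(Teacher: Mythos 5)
Your proof is correct and follows the same architecture as the paper's: the domination $\widehat\Xi_n \preceq \frac1n\sum_i\Sigma_i$, the Banach-space strong law, Corollary \ref{cor : convergent implies bounded}, and the compactness of the order intervals of Lemma \ref{lemma : charac of compact}, followed by a minimisation argument identifying the limit points as population barycentres. The only differences are that you re-derive the domination from the fixed-point equation via a variance-decomposition identity (the paper simply cites \citet[Theorem 12]{masarotto2019procrustes} for this, and your regularisation remark addresses exactly the unboundedness issue that citation absorbs), and that you spell out the limit-point identification through a uniform law of large numbers on the compact set, where the paper appeals more tersely to lower semi-continuity and the result of \citet{ziezold1977expected}.
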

\begin{proof}[Proof of Theorem~\ref{thm : consistency of Frechet mean}]
Denote by $S_n = \frac{1}{n}\sum_{i=1}^n\Sigma_i$ the arithmetic average. By \citet[][Theorem 12]{masarotto2019procrustes}, we have
\begin{equation}\label{eq : domination}
      \empbary \preceq S_n, \quad \forall\,n\geq 1.
\end{equation}
Let $\Ebb\Sigma_1\in\Bscr_1$ denote the Bochner expectation of $\Sigma_1$ in the trace norm. Under~\ref{A1}, the law of large numbers in Banach space \citep[][Corollary 7.10]{ledoux2013probability} implies $S_n\rightarrow S:=\Ebb \Sigma$ almost surely, and hence tightness of the sequence of averages $\{S_n\}$. Consequently, we have that $\{\empbary\}_{n\geq 1} \subset \mathfrak{D}_{\{S_n\}}$, and hence, by Lemma~\ref{lemma : charac of compact}, this yields the relative compactness of the sequence of empirical barycentres.
Hence, by Prokhorov's theorem, that the sequence of empirical barycentres is uniformly contained in a compact set.
Now, let $\{\empbaryk\}_{k\geq 1}$ be an arbitrary convergent sub-sequence, and it remains to show that the limiting point $\tilde\Xi$ necessarily minimises the population \Frechet functional.
    To do this, let $G\in\CovSpace$ be arbitrary.
    Now fix $\varepsilon>0$ and recall the elementary fact that there exists some $c_{\varepsilon}>0$ such that we have $(a+b)^2 \le c_{\varepsilon}a^2 + (1+\varepsilon)b^2$ for all $a,b\ge 0$.
    Then we can bound:
    \begin{align*}
        \Ebb\left[\Pi^2(\Xi,\Sigma)\right] &= \lim_{k\to\infty}\frac{1}{n_k}\sum_{j=1}^{n_k}\Pi^2(\Xi,\Sigma_j) \\
        &\le \liminf_{k\to\infty}\frac{1}{n_k}\sum_{j=1}^{n_k}\left(c_{\varepsilon}\Pi^2(\Xi,\empbaryk)+(1+\varepsilon)\Pi^2(\empbaryk,\Sigma_j)\right) \\
        &= \liminf_{k\to\infty}\left(c_{\varepsilon}\Pi^2(\Xi,\empbaryk)+(1+\varepsilon)\frac{1}{n_k}\sum_{j=1}^{n_k}\Pi^2(\empbaryk,\Sigma_j)\right) \\
        &= (1+\varepsilon)\liminf_{k\to\infty}\frac{1}{n_k}\sum_{j=1}^{n_k}\Pi^2(\empbaryk,\Sigma_j) \\
        &\le (1+\varepsilon)\lim_{k\to\infty}\frac{1}{n_k}\sum_{j=1}^{n_k}\Pi^2(F,\Sigma_j) \\
        &=(1+\varepsilon)\Ebb\left[\Pi^2(F,\Sigma)\right],
    \end{align*}
    and the conclusion follows by taking $\varepsilon \to 0$.
    
\end{proof}

\begin{remark}\label{remark:SLLN&HeineBorel}
Our Theorem~\ref{thm : consistency of Frechet mean} paralles the celebrated consistency results by \citet{le2017existence}. Note that we could not apply this  to directly derive a Law of Large Numbers, as the latent separable Hilbert space $\Hbb$ is not assumed to be locally compact. Similarly, the results by \citet{evans2020strong} on a Law of Large numbers for \Frechet mean \textit{sets} could not be applied, as the space $\CovSpace$ does not satisfy the Heine-Borel property: closed and bounded sets are not compact. However, it is interesting to notice that the building block of our proof, Lemma~\ref{lemma : charac of compact}, can be seen as a certain form of \emph{ordered Heine-Borel property}, with respect to the partial (Löwner) order.
\end{remark}

An immediate corollary follows from the assumption that $\Sigma$ has a unique barycentre.
\begin{corollary}\label{cor:consistency}
In the framework of Theorem~\ref{thm : consistency of Frechet mean}, if the \Frechet mean $\bary$ of $\Sigma$ is unique, then:
$$
\Pi(\bary,\empbary) \stackrel{\mathrm{a.s.}}{\longrightarrow} 0, \quad \mbox{ as }\, n\rightarrow \infty
$$
for any sequence of empirical \Frechet means $\widehat{\bary}_n$.
\end{corollary}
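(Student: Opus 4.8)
\textbf{Proof proposal for Corollary \ref{cor:consistency}.}
The plan is to deduce this purely as a soft topological consequence of Theorem \ref{thm : consistency of Frechet mean} together with the hypothesised uniqueness of $\Xi$; no new analytic estimates are needed. First I would fix the almost-sure event $\Omega_0$ on which the conclusion of Theorem \ref{thm : consistency of Frechet mean} holds, i.e.\ on which the sequence $\{\widehat\Xi_n\}_{n\geq 1}$ is relatively compact in $(\KK,\Pi)$ and every limit point of it is a \Frechet mean of $\Sigma$. Since $\PP(\Omega_0)=1$, it suffices to argue that on $\Omega_0$ one has $\Pi(\Xi,\widehat\Xi_n)\to 0$.

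Next I would run the standard ``subsequence'' argument in the metric space $(\KK,\Pi)$. Suppose, for contradiction, that on some $\omega\in\Omega_0$ the convergence fails; then there exist $\varepsilon>0$ and a subsequence $\{\widehat\Xi_{n_k}\}_{k\geq 1}$ with $\Pi(\Xi,\widehat\Xi_{n_k})\geq\varepsilon$ for all $k$. By the relative compactness furnished by Theorem \ref{thm : consistency of Frechet mean}, this subsequence admits a further subsequence $\{\widehat\Xi_{n_{k_j}}\}_{j\geq 1}$ converging in $(\KK,\Pi)$ to some $Q\in\KK$. Being a limit point of $\{\widehat\Xi_n\}$, the operator $Q$ is a \Frechet mean of $\Sigma$; by the uniqueness hypothesis, $Q=\Xi$. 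But then $\Pi(\Xi,\widehat\Xi_{n_{k_j}})\to 0$, contradicting $\Pi(\Xi,\widehat\Xi_{n_{k_j}})\geq\varepsilon$. Hence no such $\omega$ exists and $\Pi(\Xi,\widehat\Xi_n)\to 0$ on all of $\Omega_0$, which proves the claim.

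Since every step is elementary metric-space reasoning, there is essentially no obstacle: the only point requiring (mild) care is bookkeeping the order of quantifiers, namely that the full-measure event comes from Theorem \ref{thm : consistency of Frechet mean} and does \emph{not} depend on the choice of subsequence, so the contradiction argument is legitimate pathwise. One could alternatively invoke the abstract fact that in a metric space a relatively compact sequence all of whose limit points coincide with a fixed point $\Xi$ must converge to $\Xi$; I would state it explicitly as above rather than cite it, to keep the argument self-contained. If desired, one may also remark that combining this with Theorem \ref{thm : existence and uniqueness} shows that under \ref{assumption_trbound} and \ref{assumption_posdef} the empirical barycentres are strongly consistent for the unique population barycentre.
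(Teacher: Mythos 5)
Your proposal is correct and follows essentially the same route as the paper: both arguments combine the pathwise relative compactness and identification of limit points from Theorem \ref{thm : consistency of Frechet mean} with uniqueness of $\Xi$, via the standard sub-subsequence principle (the paper phrases it as ``every subsequence has a further subsequence converging to $\Xi$'', you phrase it as a contradiction, but these are the same argument). No gaps.
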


\begin{proof}[Proof of Corollary~\ref{cor:consistency}]
If the population \Frechet mean $\bary$ is unique, the argument in the proof of Theorem~\ref{thm : consistency of Frechet mean} shows that every sub-sequence $\{\empbaryk\}_{k\geq 1}$ contains a further sub-sequence $\{\empbarykl\}_{l\geq 1}$ converging to $\bary$. Indeed, given an arbitrary sub-sequence  $\{\empbaryk\}_{k\geq 1}$, by Lemma~\ref{lemma : charac of compact} it must admit an almost surely convergent sub-sequence, $\{\empbarykl\}_{l\geq 1}$. In particular, by sequential lower semi-continuity of the \Frechet functional and uniqueness of the population \Frechet mean of $\Sigma$, the limit must be $\bary$. Consequently, by the usual sub-sub-sequence argument, $\{\empbary\}_{n\geq 1}$ must converge to $\bary$.
\end{proof}

\subsection{Fixed-Point Equation and Regularity}\label{subsec:fixedPointandReg}
In this section we investigate certain geometric and analytic properties of Bures-Wasserstein barycentres. In particular, we first show that a Bures-Wasserstein \Frechet mean satisfies a certain fixed-point equation. Furthermore, we provide conditions ensuring that bounded existence of optimal transport maps, which in turn may be used to establish the regularity of barycentres.

\begin{proposition}
\label{prop : fixed point}
Let $\Sigma$ be a random element in $\CovSpace$
with Fr\'echet mean $\bary$. Then $\bary$ satisfies the operator equation

\begin{equation}
    \label{eq : pop bary, fixed point}
    \bary = \Ebb \left[  \left(\bary ^{\sfrac{1}{2}} \Sigma \bary^{\sfrac{1}{2}} \right)^{\sfrac{1}{2}}\right].
\end{equation}
where the expectation is taken in $(\Bscr_1,\|\cdot\|_1))$. In particular, further assuming~\ref{B1}, the we have the equality:
\begin{equation}\label{eq : pop bary, fixed point on map}
    \Ebb T_{\bary}^{\Sigma} = \Id,
\end{equation}
where the expectation in taken in $(\Tan_{\bary},\langle\cdot,\cdot\rangle_{\bary})$. 
\end{proposition}
The analogue version of this statement on the empirical level asserts that any barycentre of covariances $\Sigma_1,\dots,\Sigma_n$ necessarily satisfies:
\begin{equation}
    \label{eq : sample fix point equation}
\empbary=\frac{1}{n} \sum_{i=1}^{n}\left(\empbary^{\sfrac{1}{2}} \Sigma_{i} \empbary^{\sfrac{1}{2}}\right)^{\sfrac{1}{2}}.
\end{equation}

Concretely, the fixed point equation states that $\bary$ is on average preserved by random displacements $T_{\bary}^{\Sigma}$; or, equivalently that the push-forward of $P$ by the logarithmic embedding is centered in the tangent space $\Tan_\bary$.
In fact, equation \eqref{eq : pop bary, fixed point on map} relates the realisations of $\Sigma$ to their \Frechet mean $\bary$ at the level of the tangent space at $\bary$, essentially stating that they can be represented ``zero mean perturbations" of that mean, at least when restricted on the range of $\bary^{1/2}$.  The equation is intriguing because it relates a \Frechet mean with a Bochner mean. This connection has been used in finite dimensions to obtain a central limit theorem. For reasons that will become clear later, there are generic obstructions to this approach in infinite-dimensions, and in this case it is Equation \eqref{eq : pop bary, fixed point} that will be exploited instead. See Section~\ref{subsec : clt}.

\begin{proof}[Proof of Proposition~\ref{prop : fixed point}]
Let $\{P_k\}_{k\geq 1}$ be a sequence of finite-dimensional priojections on $\Hbb$ converging strongly to the identity. Consider the random (finite-dimensional) covariance $ P_k \Sigma P_k$. By \cite{alvarez2016fixed}, the corresponding barycentre $\bary_{(k)}$ satisfies the equation:
$$
\Sigma^{(k)} = \Ebb\left[ \left(\bary_{(k)}^{\sfrac{1}{2}} P_k \Sigma P_k \bary_{(k)}^{\sfrac{1}{2}}\right)\right].
$$
Note that for all $k\geq 1$ we have $\bary_{(k)}\preceq \Ebb[P_k\Sigma P_k] \preceq \Ebb[\Sigma]$, so that the sequence $\{\bary_{(k)}\}_{k\geq 1}$ is tight by Lemma~\ref{lemma : charac of compact}. By the usual argument, we see that $\bary_{(k)}\to\bary$ as $k\to\infty$ in trace norm, so that using continuity we obtain that \eqref{eq : pop bary, fixed point} holds for $\bary$.

\medskip

Due to the instability of optimal transport maps (see Proposition~\ref{prop : counterexample}) the second fixed-point equation cannot be deduced from the corresponding finite-dimensional result, and requires a more direct approach. Consider $\mu \equiv \Ncal(0,\bary)$ and the random $\nu \equiv  \Ncal(0,\Sigma)$. Denote by $T = T_{\bary}^{\Sigma}$ the (random) optimal map, which exists by assumption. Let $\Psi(F)=\mathbb{E} [\Pi(\Sigma, F)^{2}]$ the \Frechet functional. We follow similar steps as in the finite-dimensional case (e.g. \citet[Theorem 3.10]{alvarez2018wide}), but now adjusting for the possible infinite dimensionality of the underlying space.
Let us define $\bar{t}:= \Ebb[T_{\bary}^{\Sigma}] \in \Tan_{\bary}$
Now, note that:
\begin{align*}
    \Ebb\left[\Pi^2(\Sigma,\bary)\right]
    \: =& \:  \Ebb\left[\|T_{\bary}^\Sigma - \Id\|_{\bary}^2\right]
    \\ =& \:   \Ebb\left[\|T_{\bary}^\Sigma - \bar{t}\|^2_{{\bary}}\right] + \Ebb\left[\|\bar{t} -\Id\|^2_{{\bary}}\right]
    \\ \geq & \:   \Ebb\left[\|T_{\bary}^\Sigma - \bar{t}\|^2_{{\bary}}\right]
    \\ =& \: \Ebb\left[\Pi^2(\Sigma,\bar t\bary \bar t)\right]
\end{align*}
and by uniqueness of the minimiser we get $\bary = \bar t \bary \bar t$
proving that necessarily $\bar t =\Id\in\Tan_M$, and hence the fixed point equation.

\end{proof}

When $\Hbb$ is finite dimensional, the barycentre of an almost surely regular covariance is regular as well \citep{agueh2011barycenters} and is characterised by \eqref{eq : pop bary, fixed point}. 
In the general (possibly infinite dimensional) case, a solution to the fixed point equation is far from being unique \citep{minh2022entropic}, and to deduce that a solution to \eqref{eq : pop bary, fixed point} is also a \Frechet mean, one additionally needs to require that $\bary\succ 0$ \citep{masarotto2019procrustes} naturally leading to the question of \textit{regularity} of barycenters.
    
\medskip

 In finite dimensions,  ~\ref{A1} and~\ref{A2}  imply not only  uniqueness but also regularity of the  population \Frechet mean. This need not be true in infinite dimensions, without additional conditions, as shown in \cite{zemel2023non}, which establishes an almost-surely regular random covariance with non-regular barycentre. Nevertheless,  Proposition~\ref{prop : bary positive} establishes a sufficient condition to that effect. 


\begin{proposition}\label{prop : bary positive}
 Let $\Sigma \in \CovSpace$ be  a random covariance operator  satisfying: 
   	\begin{equation}
 \label{eq_makingTheBaryPositive}
    \Pbb\Big\{\Sigma \succ 0\,\,\&\,\, T_{\bary}^{\Sigma}\text{ exists and is invertible}\Big\} >0,
   	\end{equation}
Then its (unique) \Frechet mean $\bary\in\CovSpace$ satisfies $\bary\succ0$.
\end{proposition}

\begin{proof}[Proof of Proposition~\ref{prop : bary positive}]
Under \eqref{eq_makingTheBaryPositive}, it is easy to see that there exists $\Omega'\subset \Omega$ such that for every $\omega\in\Omega'$ we have that $\Sigma(\omega)\succ 0$, $T(\omega) := T_{\bary}^{\Sigma(\omega)}\succ 0$, and $\|T(\omega)h\| \geq c(\omega) \|h\|$ for all $h\in\Hbb$ and some $c(\omega)>0$. Therefore, $T(\omega)$ has a closed range, and we may conclude that $\range(T(\omega))=\Hbb$. Hence, for any $y\in\Hbb$, $y\neq 0$, there exists $x\in\Hbb$, $x\neq 0$, such that $T(\omega)x=y$, from which it follows that
$$
\langle \bary y,y\rangle = \langle \bary T(\omega)x,T(\omega)x\rangle = \langle T(\omega)\bary T(\omega)x,x\rangle = \langle \Sigma(\omega) x,x\rangle >0
$$
for all $\omega \in \Omega'$. Since the left hand side is deterministic and $y\in \Hbb$ was arbitrary, we conclude that $\bary\succ0$.
\end{proof}

The almost sure bounded invertibility of the optimal maps $T_{\bary}^\Sigma$ posited in Equation \eqref{eq_makingTheBaryPositive} can be secured by first principles by assuming~\ref{B2}, as stated in the following lemma.

\begin{lemma}\label{lemma:boundbothmaps}
 Let $\Sigma \in \CovSpace$ be  a random covariance operator  satisfying~\ref{B2}, and write $\bary$ for a barycentre. Then:
 $$
 \|T_{\bary}^{\Sigma}\|_{\infty} + \|T_{\Sigma}^{\bary}\|_{\infty} < \infty,\qquad \text{almost surely.}
 $$
 \end{lemma}

Before presenting the proof, we state an operator-theoretic result that we will need and use in several occasions.
\begin{lemma}
\label{lemma : A<B}
Let $A,B$ be bounded operators on a Hilbert space $\Hbb$. If $0 \preceq A \preceq B$, then:
\begin{enumerate}

    \item[(1)] $ C^* A C \preceq C^{*} B C$, for any bounded operator $C$.

    \item[(2)] $A^{\sfrac{1}{2}} \preceq B^{\sfrac{1}{2}}$.

    \item[(3)] There exists a bounded operator $G$ such that $A^{\sfrac{1}{2}}=B^{\sfrac{1}{2}} G$ and $\operatorname{ker} G^{*} \supseteq \operatorname{ker} B$.
\end{enumerate}
\end{lemma}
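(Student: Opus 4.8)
The three statements are classical operator-theoretic facts, and the plan is to prove them in the given order, using each to bootstrap the next where convenient. For part (1): since $0 \preceq A \preceq B$ means $\langle (B-A)h,h\rangle \ge 0$ for all $h\in\H$, I would simply substitute $h = Ck$ for arbitrary $k\in\H$ to get $\langle (B-A)Ck,Ck\rangle = \langle C^*(B-A)Ck,k\rangle \ge 0$, which is exactly $C^*AC \preceq C^*BC$ (self-adjointness of $C^*AC$ and $C^*BC$ being immediate). This is a one-line argument.

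For part (2), the operator monotonicity of the square root, I would invoke the standard integral representation $X^{\nicefrac{1}{2}} = \frac{1}{\pi}\int_0^\infty \lambda^{-\nicefrac{1}{2}} X(X+\lambda\Id)^{-1}\,\der\lambda$ valid for $X\succeq 0$ bounded (equivalently $X^{\nicefrac12}=\frac1\pi\int_0^\infty \lambda^{\nicefrac12}(X+\lambda\Id)^{-1}\der\lambda$ up to a constant, whichever form is cleanest). One then checks that $\lambda\mapsto (X+\lambda\Id)^{-1}$ is operator-antimonotone in $X$: if $A\preceq B$ then $A+\lambda\Id \preceq B+\lambda\Id$, and taking inverses of positive invertible operators reverses the order (this reversal itself follows from part (1) applied to $(A+\lambda\Id)^{-1}$ and $(B+\lambda\Id)^{-1}$, or directly). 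Integrating the resulting inequality $B(B+\lambda\Id)^{-1} - A(A+\lambda\Id)^{-1} = \lambda\big[(A+\lambda\Id)^{-1} - (B+\lambda\Id)^{-1}\big] \succeq 0$ against the positive measure $\lambda^{-\nicefrac12}\der\lambda$ gives $A^{\nicefrac12}\preceq B^{\nicefrac12}$. Care is needed if $A$ or $B$ is not invertible: one handles this by first proving the inequality for $A+\epsilon\Id \preceq B+\epsilon\Id$ and letting $\epsilon\downarrow 0$, using strong (or norm) continuity of the square root.

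For part (3), the factorization, I would use the classical result (Douglas' lemma / the majorization theorem for ranges) that for bounded operators, $A^{\nicefrac12} = B^{\nicefrac12}G$ for some bounded $G$ if and only if $A \preceq c^2\,B$ for some constant — more precisely, $\range((A^{\nicefrac12})^*) \subseteq \range((B^{\nicefrac12})^*)$, which is equivalent to $A^{\nicefrac12}(A^{\nicefrac12})^* \preceq c\, B^{\nicefrac12}(B^{\nicefrac12})^*$, i.e. $A\preceq cB$. Here $A\preceq B$ gives this with $c=1$. Douglas' lemma furthermore provides a canonical $G$ with $\|G\|_\infty^2 \le 1$, $\range(G)\subseteq \overline{\range((B^{\nicefrac12})^*)} = \overline{\range(B^{\nicefrac12})}$, and $\ker G = \ker A^{\nicefrac12}$; the property $\ker G^* \supseteq \ker B$ then follows because $\ker G^* = \range(G)^\perp \supseteq \range(B^{\nicefrac12})^\perp = \ker B^{\nicefrac12} = \ker B$ (using self-adjointness of $B^{\nicefrac12}$ and $B$). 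I expect part (3) to be the main obstacle, only in the sense that it requires correctly quoting the appropriate form of Douglas' majorization lemma and being careful about which closures and kernels are involved; parts (1) and (2) are routine once the integral formula is in hand.
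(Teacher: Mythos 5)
Your proof is correct and takes essentially the same route as the paper: part (1) is the identical one-line conjugation argument, and for parts (2) and (3) the paper simply cites Pedersen (1972) for the operator monotonicity of the square root and Baker (1970) for the factorization, which are precisely the facts you establish via the integral representation and Douglas' majorization lemma. No gaps.
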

The proof of (1) is trivial; for (2), see the (only) Theorem in \citet{pedersen1972some}; for (3) see Corollaries 2(b) and 1(a) in \citet{baker1970covariance}. 
We may now proceed with the proof of Lemma~\ref{lemma:boundbothmaps}

\begin{proof}[Proof of Lemma~\ref{lemma:boundbothmaps}]

Note that $\Sigma$ admits a unique (population) barycenter $\bary$ by Theorem~\ref{thm : existence and uniqueness}. By \eqref{eq : domination}, using Theorem~\ref{thm : consistency of Frechet mean}, and the usual law of large numbers,  it is easy to see that  for any $h\in\Hbb$ :
$$
\langle \bary h, h\rangle = \lim_{n\to \infty} \langle \empbary h , h\rangle \leq \lim_{n\to \infty} \langle S_n h , h\rangle = \langle S h , h\rangle,\qquad \text{almost surely,}
$$
which establishes that $\bary \preceq \Ebb[\Sigma]$.
Next, observe that if $\Sigma'$ is a copy of $\Sigma$, then assumption~\ref{B2} entails their almost sure equivalence:
$$
c(\Sigma) \frac{\Sigma'}{C(\Sigma')} \preceq \Sigma
 \preceq C(\Sigma)\frac{\Sigma'}{c(\Sigma')},\qquad \text{almost surely,}
$$
and in particular:
$$
\bary \preceq \Ebb[\Sigma] \preceq \Ebb[C(\Sigma)]\frac{\Sigma'}{c(\Sigma')},\qquad \text{almost surely.}
$$
 By Lemma~\ref{lemma : A<B}, multiplying left and right by ${\Sigma'}^{\sfrac{1}{2}}$ and taking square roots yields that:
$$
({\Sigma'}^{\sfrac{1}{2}} \bary  {\Sigma'}^{\sfrac{1}{2}})^{\sfrac{1}{2}} \preceq \left(\frac{\Ebb\left[C(\Sigma) \right]}{c(\Sigma')}\right)^{\sfrac{1}{2}}\cdot \Sigma' ,\qquad \text{almost surely.}
$$
Hence, again by  Lemma~\ref{lemma : A<B}, there exists a bounded operator $G=G(\Sigma')$ with $\|G\|_{\infty}\leq 1$ such that:
$$
  {\Sigma'}^{-\sfrac{1}{2}}({\Sigma'}^{\sfrac{1}{2}} \bary  {\Sigma'}^{\sfrac{1}{2}})^{\sfrac{1}{2}}{\Sigma'}^{-\sfrac{1}{2}} =  \left(\frac{\Ebb\left[C(\Sigma) \right]}{c({\Sigma'})}\right)^{\sfrac{1}{2}}GG^*,\qquad \text{almost surely,}
  $$
and in particular, for almost every $\Sigma\in \CovSpace$, the optimal map $T_{\Sigma}^{\bary}$ exists and is a bounded, linear operator on $\Hbb$.

\medskip

It remains to prove that the optimal map in the converse direction is almost surely bounded as well. Similar to the above, we see that:
$$
\frac{c(\Sigma)^{\sfrac{1}{2}}  }{C(\Sigma')^{\sfrac{1}{2}}}\left(\bary^{\sfrac{1}{2}}\Sigma'\bary^{\sfrac{1}{2}}\right)^{\sfrac{1}{2}}
\preceq
\left(\bary^{\sfrac{1}{2}}\Sigma\bary^{\sfrac{1}{2}}\right)^{\sfrac{1}{2}} 
$$
In particular, integrating out $\Sigma$, the fixed point equation \eqref{eqn:pop-fixed-pt} gives:
$$
\frac{\Ebb[c(\Sigma)^{\sfrac{1}{2}}]}{C(\Sigma')^{\sfrac{1}{2}}}\left(\bary^{\sfrac{1}{2}}\Sigma'\bary^{\sfrac{1}{2}}\right)^{\sfrac{1}{2}} \preceq  \bary ,\qquad \text{almost surely,}
$$
and again by Lemma~\ref{lemma : A<B}  there exists
bounded operator $L=L(\Sigma)$  with $\|L\|_{\infty}\leq 1$ such that:
$$
  {\bary}^{-\sfrac{1}{2}}({\bary}^{\sfrac{1}{2}} \Sigma'  {\bary}^{\sfrac{1}{2}})^{\sfrac{1}{2}}{\bary}^{-\sfrac{1}{2}} = \frac{C(\Sigma')^{\sfrac{1}{2}}}{\Ebb[c(\Sigma)^{\sfrac{1}{2}}]} LL^*,\qquad \text{almost surely,}
  $$
similarly to the first part of the proof, which finally entails that for almost every $\Sigma\in \CovSpace$ the optimal map $T_{\bary}^{\Sigma}$ exists and is a bounded, linear operator on $\Hbb$.
\end{proof}

We have in particular shown that in the presence of assumptions~\ref{B2}, the fixed-point equation fully characterises infinite-dimensional Bures-Wasserstein barycenters.

\begin{corollary}\label{cor:FPE}
    Let $\Sigma\in \CovSpace$ be a random covariance operator satisfying~\ref{B2}. Then and $\bary\in\CovSpace$ is the barycenter of $\Sigma$ if and only if
    \begin{equation}\label{eqn:pop-fixed-pt}
	\bary\succ 0 \qquad \text{and}\qquad \Ebb[T_{\bary}^{\Sigma}-I] = 0,
\end{equation}
where the expectation is a Bochner integral in the Hilbert space $\Tan_M$.
\end{corollary}

\begin{proof}
For the first direction, let $M$ satisfy \eqref{eqn:pop-fixed-pt} and let us prove that $M$ need be the barycenter of $P$, i.e.\ $M=\bary$. Let us write $\mu_M$ for the centred gaussian measure $\Ncal(0,M)$.
As in the proof of \cite[][Theorem 14]{masarotto2019procrustes} define the convex potential $\phi(x) = \langle t_{M}^{\Sigma}x, x \rangle$ and denote by $\phi^*(y) = \sup_{x\in\Hbb}\langle x,y\rangle - \phi(x)$ its Legendre transform. Weak Kantorovich duality then yields that for any $ M\in\CovSpace$ the following lower bound holds:
\begin{align*}
    \frac{1}{2}\Pi( M, \Sigma)^2  \geq \int_{\Hbb}\left(\frac{1}{2}\|x\|^2 - \phi(x)\right) \diff \mu_{ M} + \int_{\Hbb}\left(\frac{1}{2}\|y\|^2 - \phi^*(y)\right) \diff \mu_{\Sigma}, \qquad\text{almost surely,}
\end{align*}
while strong Kantorovich duality gives the following equality:
\begin{align*}
    \frac{1}{2}\Pi(\bary, \Sigma)^2  = \int_{\Hbb}\left(\frac{1}{2}\|x\|^2 - \phi(x)\right) \diff \mu_{ \bary} + \int_{\Hbb}\left(\frac{1}{2}\|y\|^2 - \phi^*(y)\right) \diff \mu_{\Sigma}, \qquad \text{almost surely.}
\end{align*}
Thus, taking expectations, since $\Ebb\left[ \phi(x) \right] = {\|x\|^2}/{2}$, we obtain via Fubini's theorem that:
\begin{align*}
\Ebb[\Pi( \bary, \Sigma)^2 ]
\:=&\:
2\Ebb\left[\int_{\Hbb}\left(\frac{1}{2}\|y\|^2 - \phi^*(y)\right) \diff \mu_{\Sigma}\right]
\\ \leq&\:  
\Ebb\left[\Pi( M, \Sigma)^2 \right], \qquad \forall\:  M \in \CovSpace.
\end{align*}

For the second direction, first note that Proposition~\ref{lemma:boundbothmaps} and Proposition~\ref{prop : bary positive} entail that if $\bary$ is the barycentre, then it satisfies $\bary\succ 0$, so that the optimal map $T_{\bary}^{\Sigma}$ exists on a dense subspace of $\mu_{\bary}$-measure $1$. Note that~\ref{B2} implies that $\Ebb\left[|T_{\bary}^\Sigma - \Id\|^2_{{\bary}}\right]<\infty$, hence proving the existence of the Bochner integral $ \Ebb\left[(T_{\bary}^{\Sigma}-\Id)\right] \in \Tan_{\bary}$. Finally, repeating the steps in the proof of (the second part of) Proposition~\ref{prop : fixed point}, we may conclude that $\bary$ satisfies the fixed point equation.
\end{proof}

\subsection{Convergence of Optimal Maps}\label{subsec : estimation maps}

Further to the convergence of an empirical \Frechet mean $\empbary$ to its population counterpart $\bary$, one may be interested in the convergence properties of the empirical optimal maps $T_{\empbary}^{\Sigma_i}$ to their population counterparts $T_{\bary}^{\Sigma_i}$. In addition to being natural objects, these maps have important probabilistic and statistical roles, in the construction of optimal multicouplings and tangent-space principal component analyses, respectively (see, e.g., \citet{zemel2019frechet}). In the finite-dimensional setting $\Hbb= \Rbb^d$, \citet[][Proposition 6]{zemel2019frechet} grants convergence of the maps in the operator norm, provided certain (mild) regularity conditions hold. 
 Unfortunately, the following negative result illustrates that one cannot generally expect the same mode of convergence in infinite-dimensions.

\begin{proposition}[Instability of optimal maps when $\mathrm{dim}(\Hbb)=\infty$]\label{prop : counterexample} For any $\bary\succ 0$ on an infinite dimensional separable Hilbert space $\Hbb$, there exists $F\succ 0$, a sequence $\empbary\succ 0$, and $\varepsilon>0$
such that  
$$\Pi(\bary,\empbary)\stackrel{n\to\infty}{\longrightarrow}0, $$
while
$$\|T_{\empbary}^{F}-T_{\bary}^{F}\|_{\infty}>\varepsilon,\quad \forall\, n\geq 1.$$
\end{proposition}
\begin{proof}
By the spectral theorem, $\bary=\sum_{j\geq 1}\lambda^2_j e_j\otimes e_j$ for $\{e_k\}_{k\geq 1}$ a complete orthonormal system of $\Hbb$ and a square-summable positive sequence $\{\lambda_k\}_{k\geq 1}$. Define $E_n=\sum_{j\leq n}\lambda^2_j e_j\otimes e_j$ to be a rank-$n$ truncation of $\bary$. For $a,b>0$ such that $a^2+b^2=1$ let $\varepsilon= |1-b^{-1}|>0$ and define
$$\empbary = a^2 E_n+ b^2 \bary.$$
Writing $\bary-\empbary=(1-b^2)\bary-a^2E_n=a^2(\bary-E_n)$, we observe that
$$\Pi(\empbary,\bary)\to 0,\quad \mbox{ as }n\to\infty.$$
\noindent Now note that $\bary\succ 0$ and $\empbary\succ 0$, and they commute.
It follows that $T_{\empbary}^{\bary}= \bary^{\sfrac{1}{2}}\empbary^{-\sfrac{1}{2}}$, which reduces to
\begin{align*}
 \bary^{\sfrac{1}{2}}\empbary^{-\sfrac{1}{2}} &=
\left(\sum_{j\geq 1}\lambda_j e_j\otimes e_j\right)\left(\sum_{j\geq 1}\frac{1}{\lambda_j\sqrt{a^2\mathbf{1}\{j\leq n\}+b^2}} e_j\otimes e_j\right)\\
&=\sum_{j\leq n}e_j\otimes e_j+\frac{1}{b}\sum_{j>n} e_j\otimes e_j.
\end{align*}
Thus, for any $n\geq 1$ we have
$$\|(T_{\bary}^{\bary}-T_{\empbary}^{\bary})e_{n+1} \|=\|(\Id-T_{\empbary}^{\bary}) e_{n+1}\| =\left\|e_{n+1}-b^{-1}e_{n+1}\right\|=|1-b^{-1}|=\varepsilon>0.$$
Setting $F=\bary$ completes the proof.
\end{proof}

 We nevertheless show in our next Theorem that, under rather mild conditions, pointwise convergence (also known as strong operator convergence) \emph{can} be obtained:

\begin{theorem}\label{thm : convergence of optimal maps}
Let $\Sigma_1,\ldots,\Sigma_n$ be i.i.d.\ copies of a random element  $\Sigma$ in $\CovSpace(\Hbb)$ for $\Hbb$ a separable (possibly infinite dimensional) Hilbert space. Assume that $\Sigma$ admits a unique and regular population \Frechet mean $\bary\succ 0$ and that $\|T_{\bary}^{\Sigma}\|<\infty$ almost surely. Then, the following pointwise convergence result holds:
$$
\forall \: h\in\Hbb
\:,\quad \: \left\|T_{\bary}^{\Sigma_1} h -  T_{\empbary}^{\Sigma_1} h \right\| \stackrel{\mathrm{a.s.}}{\longrightarrow} 0\quad\text{ as } n\rightarrow \infty.
$$
\end{theorem}
\begin{proof}[Proof of Theorem~\ref{thm : convergence of optimal maps}] For notational simplicity, we write  $T = T_{\bary}^{\Sigma_1}$ and $ T_{n} = T_{\empbary}^{\Sigma_1}$ for the optimal maps from the population and sample barycentres to $\Sigma_1$, respectively. The proof will be broken down into four steps.

\noindent{\textit{Step 1: weak convergence of optimal plans}}.
Consider the Gaussian measures $\mu \equiv \Ncal(0,\bary)$, $\mu_n \equiv \Ncal(0,\empbary)$. By Theorem~\ref{thm : consistency of Frechet mean}, $\Pi(\bary, \empbary)\rightarrow 0$ almost surely, and equivalently $W_2(\mu, \mu_n) \rightarrow 0$. To the random element $\Sigma_1$ corresponds a (random) measure $\nu_1 \equiv \Ncal(0,\Sigma_1)$.
Let  $\pi$ be the \textit{unique} optimal coupling between $\mu$ and $\nu_1$, which is supported on the graph of the optimal map $T$. Note that uniqueness is ensured by our asumption that $\bary\succ 0$. Let $ \pi_n$ be an optimal coupling between $\mu_n$ and $\nu_1$ which is supported on the graph of the optimal map $T_{n}$. Now note that uniqueness of $\pi$ and the stability of optimal transportation imply that $\pi_n$ converges weakly to $\pi$ (see \citet[][Theorem 3]{schachermayer2009characterization}).

\medskip 

\noindent{\textit{Step 2: uniform boundedness}}.
Consider the set
$$
C = \{(h_1,h_2)\in \Hbb^2 \::\: \|h_1\| \leq 1 \text{ and } \|h_2\| \geq 2\|T\|_{\infty}\}
$$
which is closed in the product topology on $\Hbb^2$. By the Portmanteau lemma,
\begin{equation} \label{eq : proof portmanteau}
0 \leq \liminf_{n\rightarrow\infty} \pi_n(C) \leq \limsup_{n\rightarrow\infty} \pi_n(C) \leq \pi(C).
\end{equation}
Since $\pi$ is supported on the graph of $T$, we have
$$
\pi(C) = \mu \left( \{h\in\Hbb\::\: \|h\|\leq 1 \text{ and }\|Th\|\geq 2\|T\|_{\infty}\} \right) = 0,
$$
as the set appearing on the right-hand-side is empty. By \eqref{eq : proof portmanteau}, since $\pi_n$ is supported on the graph of $T_n$, we obtain:
$$\lim_{n\rightarrow\infty}   \mu_n\left( \{h\in\Hbb\::\: \|h\|\leq 1, \: \|T_n h\|\geq 2\|T\|_{\infty}\} \right) = 0.
$$
In particular, for every $k\geq 1$ we may find $n_k$ such that 
$$
\mu_{n_k}\left( \{ h\in\Hbb\::\: \|h\|\leq 1, \: \|T_{n_k} h\|\geq 2\|T\|_{\infty}\} \right) < 2^{-k}.
$$
Therefore, taking a sequence of random variables $X_{n_k} \sim \mu_{n_k}$:
$$
\sum_{k\geq 1} \Pbb(\|X_{n_k}\|\leq 1, \: \|T_{n_k}X_{n_k}\|\geq 2\|T\|_{\infty}) 
= \sum_{k\geq 1} \mu_{n_k}\left( \{h\in\Hbb\::\: \|h\|\leq 1, \: \|T_{n_k} h\|\geq 2\|T\|_{\infty}\} \right) < \infty.
$$
By the (first) Borel-Cantelli lemma, we obtain that for large enough $k$:
$$
 \{ h\in\Hbb\::\: \|h\|> 1 \text{ or } \|T_{n_k} h\| \geq 2\|T\|_{\infty}\}, \quad \mu_{n_k} \text{ a.s.}
$$
or equivalently:
$$
 \{ h\in\Hbb\::\:\text{ if } \|h\|\leq 1 \text{ then } \|T_{n_k} h\| < 2\|T\|_{\infty}\}, \quad \mu_{n_k} \text{ a.s.}.
$$
Therefore, we have:
$$
\sup_{h\in \mathrm{supp}(\mu_{n_k)}, \|h\|\leq 1} \|T_{n_k}h\|_{\infty}\leq 2\|T\|_{\infty}
$$
and since each $T_{n_k}$, is linear (in particular, homogeneous), one can always
rescale an arbitrary $h \in \Hbb$ to have norm bounded above by $1$, hence:
$$
\sup_{h\in \mathrm{supp}(\mu_{n_k})} \frac{\|T_{n_k}h \|}{\|h\|}< 2\|T\|_{\infty}
$$
In particular, by continuity of the $T_{n_k}$, the bound above holds on the closure of the support of $\mu_{n_k}$. 
However, $T_{n_k}$ is null outside of such a subspace (see  \citet{masarotto2022transportation}), and thus the bound holds on  $\Hbb$: $\|T_{n_{k}}\|_{\infty}< 2 \|T\|_{\infty}$ for $k$ sufficiently large. 

Now we argue by contradiction. Suppose that the sequence $T_n$  satisfy the above bound eventually in $n$. This would imply that there exists a sub-sequence $\{T_{n_l}\}_{l\geq 1}$ such that $\| T_{n_l}\|_{\infty}\geq 2\|T\|_{\infty}$ for every $l\geq 1$. However, reproducing the arguments presented above, there necessarily exists a further sub-sequence $\{T_{n_{l_k}}\}_{k\geq 1}$ that needs to satisfy the required bound. 

\medskip

\noindent{\textit{Step 3: optimal maps are close}}. For $h\in \Hbb$ arbitrary, let $B(h,\delta)$ be the open ball of radius $\delta$ centred at $h$. By continuity of $T$, for every $\varepsilon>0$ there exits $\delta = \delta(\varepsilon)>0$ such that $h' \in B(h,\delta) \:\Rightarrow\: Th' \in B(Th,\varepsilon)$. 
Take any sequence $\varepsilon_k\rightarrow 0$ and the corresponding $\delta_k\rightarrow 0$.
For any $k\geq 1$:
\begin{align*}
\pi\big(B(h,\delta_k) \times B(Th,\varepsilon_k) \big) 
=&
\pi\big( \{ (h', Th') \::\:h' \in B(h,\delta_k), T h' \in B(Th,\varepsilon_k)  \} \big) 
\\=&
\mu\big(B(h,\delta)\big)>0.
\end{align*}
By the Portmanteau lemma, weak convergence of $\pi_n$ to $\pi$ implies that, for any open set $A$, $\liminf_n \pi_n(A)\geq \pi(A).$ In particular,
$$
\liminf_n \pi_n \left(B(h,\delta_k) \times B(Th,\varepsilon_k) \right)  >0.
$$
Since $\pi_n$ is supported on the graph of $T_n$, this implies that for any $k\geq 1$, there exists $n_k\geq 1$ and $h_{n_k}\in \overline{\mathrm{supp}(\mu_{n,k}})$ such that $\|h-h_{n_k}\|<\delta_k$ and $\| T_{n_k}h_{n_k} - Th\| < \varepsilon_k.$

\medskip

\noindent{\textit{Step 4: pointwise convergence}}. By the triangle inequality,
$$
\|Th - T_{n_k}h\| \: \leq \: \|Th - T_{n_k}h_{n_k}\| + \|T_{n_k}(h_{n_k} - h)\| \: \leq \: \varepsilon_k + \|T_{n_k}\|_{\infty}\delta_k.
$$
To conclude that the sub-sequence of operators $\{T_{n_k}\}_{k\geq 1}$ converges pointwise to $T$ it suffices to have an (integrable) uniform (in $n$) bound on $\{\|T_{n}h\|\}_{n\geq 1}$ for all $h\in \overline{\mathrm{supp}(\mu_{n}})$. But this we obtained in \textit{Step 2}.
Therefore, we can conclude that there exists a sub-sequence of operators $\{ T_{n_k}\}_{k\geq 1}$ converging to $T$ pointwise. In fact, it is straightforward to show that any sub-sequence of operators admits a further sub-sequence converging pointwise to $T$, and that thus the entire sequence $T_n$ necessarily converges to $T$ pointwise.


\end{proof}

The pointwise result also implies convergence in probability on the tangent space $\mathcal{T}(\bary)$ at the population \Frechet mean, as stated in the following corollary.
\begin{corollary}\label{cor : convergence of maps in the tangent space}
Under the assumptions of Theorem~\ref{thm : convergence of optimal maps} and with the same notation:
$$
\left\| T_{\empbary}^{\Sigma_1} - T_{\bary}^{\Sigma_1} \right\|_{\Tan(\bary)} \overset{\Pbb}{\longrightarrow} \:0.
$$
\end{corollary}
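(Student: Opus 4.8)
The plan is to express the squared tangent-space distance as a series over an eigenbasis of $\Xi$ and then pass to the limit by a dominated-convergence argument, feeding in the pointwise convergence and the uniform operator bound already available from Theorem~\ref{thm : convergence of optimal maps}. Set $D_n:=T_{\widehat\Xi_n}^{F}-T_{\Xi}^{F}$. Since an optimal transport map between centred Gaussians is, by the closed form \eqref{eq : transport map}, of the shape $A^{-\nicefrac{1}{2}}BA^{-\nicefrac{1}{2}}$ with $B$ self-adjoint and non-negative, both $T_{\widehat\Xi_n}^{F}$ and $T_{\Xi}^{F}$ are self-adjoint, and hence so is $D_n$; moreover $\|\Xi^{\nicefrac{1}{2}}D_n\|_2\le\|\Xi^{\nicefrac{1}{2}}\|_2\,\|D_n\|_\infty<\infty$ because $\Xi^{\nicefrac{1}{2}}\in\BB_2$, so $D_n\in\Tan(\Xi)$ and the statement is well-posed. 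Taking a complete orthonormal system $\{e_i\}_{i\ge1}$ of eigenvectors of $\Xi$ with eigenvalues $\{\lambda_i\}_{i\ge1}$ (so $\sum_i\lambda_i=\tr(\Xi)<\infty$), self-adjointness of $D_n$ gives $\|\Xi^{\nicefrac{1}{2}}D_n\|_2=\|(\Xi^{\nicefrac{1}{2}}D_n)^*\|_2=\|D_n\Xi^{\nicefrac{1}{2}}\|_2$, whence
\[
\big\|T_{\widehat\Xi_n}^{F}-T_{\Xi}^{F}\big\|_{\Tan(\Xi)}^{2}=\|D_n\Xi^{\nicefrac{1}{2}}\|_2^{2}=\sum_{i\ge1}\|D_n\Xi^{\nicefrac{1}{2}}e_i\|^{2}=\sum_{i\ge1}\lambda_i\,\|D_ne_i\|^{2}.
\]

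I would then bring in two ingredients. First, Theorem~\ref{thm : convergence of optimal maps} gives, for each fixed $h\in\H$, $\|D_nh\|\to0$ almost surely; intersecting the countably many null sets indexed by $h\in\{e_i\}_{i\ge1}$ produces a single almost sure event on which $\|D_ne_i\|\to0$ for every $i\ge1$. Second, the argument of Step~2 in the proof of Theorem~\ref{thm : convergence of optimal maps} — which, as noted at the end of that proof, uses only bounded existence of the maps together with Theorem~\ref{thm : consistency of Frechet mean} and the regularity of $\Xi$, and hence applies verbatim to $T_{\widehat\Xi_n}^{F}$ — shows that, almost surely, $\|T_{\widehat\Xi_n}^{F}\|_\infty\le 2\|T_{\Xi}^{F}\|_\infty$ for all $n$ sufficiently large. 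Since $T_{\Xi}^{F}$ is a fixed bounded operator (Remark~\ref{rk:well-def of maps}), it follows that, almost surely, there are a deterministic finite constant $C$ and a random index $N$ with $\|D_n\|_\infty\le C$, hence $\lambda_i\|D_ne_i\|^{2}\le C^{2}\lambda_i$ for all $i$, whenever $n\ge N$.

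Finally, on the intersection of these two almost sure events the terms $\lambda_i\|D_ne_i\|^{2}$ are, eventually in $n$, dominated by the summable sequence $C^{2}\lambda_i$ and converge to $0$ for each fixed $i$, so dominated convergence for series yields $\sum_{i\ge1}\lambda_i\|D_ne_i\|^{2}\to0$, i.e.\ $\|T_{\widehat\Xi_n}^{F}-T_{\Xi}^{F}\|_{\Tan(\Xi)}\to0$ almost surely, which in particular gives the asserted convergence in probability. I do not expect a substantive obstacle: the only points needing care are checking self-adjointness of the maps — so that it is $D_ne_i$, rather than $(D_n)^{*}e_i$, that enters the series — and keeping track of the fact that the threshold $N$ and the associated almost sure event depend on $\omega$, which is harmless for an almost sure and hence in-probability statement; everything of analytic substance is inherited from Theorem~\ref{thm : convergence of optimal maps}.
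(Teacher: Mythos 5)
Your proposal is correct and follows essentially the same route as the paper: decompose $\|(T_{\widehat\Xi_n}^{F}-T_{\Xi}^{F})\Xi^{\nicefrac{1}{2}}\|_2^2$ as $\sum_i\lambda_i\|D_ne_i\|^2$ over an eigenbasis of $\Xi$, then combine the pointwise almost sure convergence from Theorem \ref{thm : convergence of optimal maps} with the uniform operator-norm bound from Step~2 of its proof. The only (harmless) difference is the finish: you invoke dominated convergence for series to get almost sure convergence of the whole sum, whereas the paper splits into a finite head and a tail controlled in probability via $\E\|T\|_\infty<\infty$; your version is marginally cleaner and in fact delivers the stronger almost sure conclusion, and your explicit check of self-adjointness of the maps (so that $\|\Xi^{\nicefrac{1}{2}}D_n\|_2=\|D_n\Xi^{\nicefrac{1}{2}}\|_2$) tidies a point the paper leaves implicit.
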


\begin{proof}[Proof of Corollary~\ref{cor : convergence of maps in the tangent space}]
Let us write $T, T_{n} $ for $T_{\bary}^{\Sigma_1}$ and $T_{\empbary}^{\Sigma_1}$, respectively. By definition of the tangent space norm,
\begin{align}
   \left\| T_{\empbary}^{\Sigma_1} - T_{\bary}^{\Sigma_1} \right\|^2_{\Tan(\bary)}= \| \big(T - T_n\big)\bary^{\sfrac{1}{2}}  \|_{2} ^2
    = & \:
    \sum_{k\geq 1} \| \big(T - T_n\big)\bary^{\sfrac{1}{2}} e_k\|^2
    \nonumber \\ = & \: 
    \sum_{k\geq 1} \lambda_k \| \big(T - T_n\big) e_k \|^2
    \nonumber \\ \leq & \: 
    \lambda_1 \sum_{k=1}^N \| \big(T - T_n\big) e_k \|^2 
    +
    5 \| T \|_{\infty}^2  \sum_{k > N} \lambda_k
    \label{eqCovEst_FINAL2}
\end{align}
where we have used the uniform bound obtained in Step 2 in the proof of Theorem~\ref{thm : convergence of optimal maps}.
Now consider the second term in \eqref{eqCovEst_FINAL2}. For every $\varepsilon>0$, there exists $N = N(\varepsilon)$ such that surely:
$
\sum_{k > N} \lambda_k < \varepsilon.
$
Furthermore, $\|T\|_{\infty} <\infty$ by assumption. Hence,
Therefore, choosing $N=N(\varepsilon/\|T\|_{\infty})$, we get that 
for every $\varepsilon>0$:
\begin{equation} \label{eq:smallprob}
    \Pbb \Big(\|T\|_{\infty}^2 \sum_{k > N} \lambda_k < \varepsilon \Big) \geq 1-\delta.
\end{equation}
We now turn to the first term in \eqref{eqCovEst_FINAL2}. By Theorem~\ref{thm : convergence of optimal maps} we know that $T - T_n$ converges pointwise almost surely to $0$ on the range of $\bary^{\sfrac{1}{2}}$. Therefore, $T - T_n$ almost surely converges to zero on any finite subset of the eigenvalues of $\bary$. In particular, $n$ sufficiently large, the term  $\sum_{k = 1}^N \| \big(T - T_n\big) e_k \|^2 $ may be rendered arbitrarily small with high probability.  
\end{proof}


 \subsection{Central Limit Theorem}\label{subsec : clt}

In this section we show that the distance 
$\Pi(\bary,\empbary)$ between the population and sample barycentre is $\sqrt{n}$-tight. Specifically, we will show that the scaled difference $\sqrt{n}(\bary^{\sfrac{1}{2}}-\empbary^{\sfrac{1}{2}})$ converges to a centred Gaussian random element in the space of Hilbert-Schmidt operators.

In finite dimensions ($\Hbb=\mathbb{R}^d$) several authors established results in this direction. {\citet{kroshnin2021statistical} obtain a central limit theorem in the finite-dimensional case, via the transport maps involved in  \eqref{eq : pop bary, fixed point on map} and their empirical versions. Necessary to the success of this approach is the \Frechet differentiability of the functional $\bary \mapsto T_{\bary}^{F}$ when $\Hbb=\mathbb{R}^d$ and $\bary\succ 0$, combined with finer properties of the associated differential (Lemma A.3 in \citet{kroshnin2021statistical}). However, such a functional will generally fail to be differentiable at all for general $\Hbb$, much less posses a differential with the analogous properties. In fact, the functional will typically fail to even be continuous: see the counterexample in Proposition~\ref{prop : counterexample}.
{In light of this, we will circumvent the direct use of Equation \eqref{eq : pop bary, fixed point on map} which requires going through the (intangible) regularity of $\bary\mapsto T_{\bary}^F$. Instead, we will take a more indirect $Z$-estimation approach, making use of Equation \eqref{eq : pop bary, fixed point}, and going through the (tangible) regularity of a different functional. 

Again in the finite dimensional case, \citet{le2022fast} show the rate of convergence building on the notion of extendable geodesics: for $\lambda_{\text {in }}, \lambda_{\text {out }}>0$, a constant-speed geodesic $\gamma:[0,1] \rightarrow \CovSpace$ is $\left(\lambda_{\text {in }}, \lambda_{\text {out }}\right)$-extendible if there exists a constantspeed geodesic $\gamma^{+}:\left[-\lambda_{\text {in }}, 1+\lambda_{\text {out }}\right] \rightarrow \CovSpace$ such that $\gamma$ is the restriction of $\gamma^{+}$to $[0,1]$. 
In their work  \citet{le2022fast} guarantee that geodesics are bi-extendable by bounding the minimal eigenvalue of $\Sigma$ away from zero; this, however, is impossible to transfer to the case of general $\Hbb$, as would contradict traceability in infinite dimensions. Nevertheless, we show that this program can still be executed in the presence of a stronger variant of assumption~\ref{B2}.

\begin{proposition}\label{prop:biextend}
    Let $\Sigma$ be random element of $\CovSpace(\Hbb)$. Fix $0<\kappa_0<\kappa_1$ and $R\in\CovSpacePos$, and assume that:
    \begin{equation}\label{eq:strongdom}
    \kappa_0 R \preceq \Sigma \preceq \kappa_1 R,\qquad\textnormal{almost surely.}
    \end{equation}
    Then, if $\kappa = (\kappa_1/ \kappa_0)^{\sfrac{1}{2}}$ satisfies $\kappa - \kappa^{-1}<1$, we have that:
    $$
    \Ebb\left[\Pi^2(\bary,\empbary)\right]
    \leq \frac{4\Ebb[\Pi^2(\bary,\Sigma)]}{(1 - \kappa + \kappa^{-1})^2\:n},\qquad n\geq 1.
    $$
\end{proposition}

\begin{proof}[Proof of Proposition~\ref{prop:biextend}]
Note that \eqref{eq:strongdom} implies, by the proof of Lemma~\ref{lemma:boundbothmaps}, that  
$$
\|T_{\bary}^{\Sigma}\|_{\infty} < \kappa
\qquad \text{and}\qquad
\|T_{\Sigma}^{\bary}\|_{\infty} < \kappa
$$
so that almost every $\Ncal(0,\Sigma)$ is obtained by pushforward of $\Ncal(0,\bary)$ by the gradient of an $\kappa^{-1}$-strongly convex and $\kappa$-smooth function. Hence we can conclude by \citet[][Corollary 16]{le2022fast}.
\end{proof}

 The rather stringent assumptions leading to the parametric rate of convergence via \citet[][Corollary 16]{le2022fast} turn out to be superfluous in the Bures-Wasserstein context. In fact, we will show that under strictly weaker conditions we can get a \textit{stronger} result in the form of a central limit theorem.
That is, we prove under~\ref{B2} that the scaled difference $\sqrt{n}(\bary^{\sfrac{1}{2}}-\empbary^{\sfrac{1}{2}})$ converges in law to a centred Gaussian random element in the space of Hilbert-Schmidt operators.

Let us now describe the high-level heuristic of our proof strategy.} Recall that the population and sample barycentres respectively satisfy \eqref{eq : pop bary, fixed point} and \eqref{eq : sample fix point equation}.
Consider the functional:
$$
\phi_n\::\: 
 F\mapsto \frac{1}{n}\sum_{j=1}^n(F^{\sfrac{1}{2}}\Sigma_j F^{\sfrac{1}{2}})^{\sfrac{1}{2}} - F.
$$
Assuming that we can Taylor expand, we hope to be able to write:
\begin{align*}
    \phi_n(\bary) = \phi_n(\empbary) + \der_{\empbary}\phi_n (\bary- \empbary) + o(\bary- \empbary),\qquad\mbox{almost  surely, as } n\to\infty,
\end{align*}
with $\der_{\empbary}\phi_n$ playing the role of \Frechet derivative of $\phi_n$ at $\empbary$.
Writing out the above equation more explicitly, while noticing that the fixed point characterisation yields $\phi_n(\empbary) = 0$ for every $n$, we then hope to obtain:
\begin{align*}
    \frac{1}{n} \sum_{j=1}^n (\bary^{\sfrac{1}{2}}\Sigma \bary^{\sfrac{1}{2}})^{\sfrac{1}{2}} - \bary =  \der_{\empbary}\phi_n (\bary- \empbary) + o(\bary- \empbary),\qquad\mbox{almost  surely, as } n\to\infty.
\end{align*}
In particular, in light of \eqref{eq : pop bary, fixed point} and the classical CLT, to conclude that $n^{-\sfrac{1}{2}}(\bary- \empbary)$ converges to a Gaussian element on the space of trace class operators, it will suffice to show bounded invertibility of the \Frechet derivative operator $\der_{\empbary}\phi_n $. 
 In the following we transform this heuristic into a rigorous proof.

\medskip




Let us begin with some operator-theoretic preliminaries.
For $F\in\CovSpacePos$ let us define the operator-of-operators $\G{F}(\cdot)$ is defined by:
$$
H \mapsto \G{F}(H):= \int_{0}^\infty\left( e^{-t F^{\sfrac{1}{2}}}He^{-t F^{\sfrac{1}{2}}}H\right)\diff t
$$
In finite dimensions, for any $F\succ 0$ the operator $\G{F}$ is the \Frechet derivative of the square root functional $F\mapsto F^{\sfrac{1}{2}}$. However, such operator is generally unbounded in infinite dimensions, so that it is improper to discuss its \Frechet differentiability. With this in mind, we next establish a weaker differentiability result for a regularised square root operation, which will be sufficient and instrumental for the Taylor-expansion argument that will yield our central limit theorem. First, we give some simple but important properties of $\G{F}$.

\begin{lemma}\label{lemma : sqrt der}
The following hold:
\begin{enumerate}
\setlength{\itemindent}{+2em}
    \item[(i)] For $F\in\CovSpace_+$,  $\G{F}(\cdot)$  satisfies the Sylvester equation:
        \begin{equation}\label{eq:sylvester}
        F^{\sfrac{1}{2}}\G{F}(H) + \G{F}(H)F^{\sfrac{1}{2}} = H, \qquad H\in\Bscr_2
        \end{equation}

\end{enumerate}
Let $\{(e_i\}_{i\geq 1}$ be a CONS of eigenvectors for $F$ with eigenvalues sequence $\{\lambda_i\}_{i\geq 1}$.
\begin{enumerate}
\setlength{\itemindent}{+2em}  \setcounter{enumi}{1}

    \item[(ii)] For $F\in\CovSpace_+$, $\G{F}(\cdot)$ is densely defined by:
        \begin{equation}\label{eq : sqrt id}
            \langle H e_i, e_j \rangle 
           = (\lambda_i^{\sfrac{1}{2}} + \lambda_j^{\sfrac{1}{2}} )\cdot\langle \G{F} (H) e_i, e_j \rangle.
        \end{equation}

    \item[(iii)] $\G{F}(\cdot)$ commutes with the operators $H\mapsto F^\alpha H$ and $H\mapsto HF^\alpha $, for any $\alpha>0$.

    \item[(iv)] For any $G\in\CovSpace\::\: G\preceq F$ we have that:
        \begin{equation}\label{eq: op_bound_sqrt}
        \| \G{F}(G^{\sfrac{1}{2}}H)\|_2\leq \|H\|_2.
        \end{equation}

    \item[(v)] If $0\preceq A \preceq B$ then:
        \begin{equation}\label{eqn:G increasing}
        \forall \: H\in\Bscr_2  \: : \quad \langle  \G{A^2}(AHA) , H \rangle_2 \leq \langle  \G{B^2}(BHB) , H \rangle_2.
        \end{equation}

    \item[(vi)] For fixed $\Sigma\in\CovSpace$, the map: $g\,:\,F \mapsto (F\Sigma F)^{\sfrac{1}{2}}$
is \Frechet differentiable on the space of self-adjoint Hilbert-Schmidt operators, with \Frechet derivative given by:
\begin{equation}\label{eq:sqrtder}
H\mapsto \G{F\Sigma F}(F\Sigma H + H\Sigma F)
\end{equation}

\end{enumerate}

\end{lemma}

\begin{proof}[Proof of Lemma~\ref{lemma : sqrt der}]

    Let us begin by proving (i). Consider the map $$t\mapsto  Z_t:= e^{-t F^{\sfrac{1}{2}}}He^{-t F^{\sfrac{1}{2}}}H,$$ and observe that $\G{F}(H) = \int_{0}^{\infty}Z_t\diff t$. Differentiating $Z_t$ wrt $t$ gives:
    $$
    \partial_t Z_t =  -F^{\sfrac{1}{2}}e^{-t F^{\sfrac{1}{2}}}He^{-t F^{\sfrac{1}{2}}} - e^{-t F^{\sfrac{1}{2}}}He^{-t F^{\sfrac{1}{2}}} F^{\sfrac{1}{2}}.
    $$
    Now, note that $\int_0^{\infty} \partial_t Z_t = \lim_{t\to\infty}Z_t - Z_0 = - H$ since $F\succ 0$. Therefore:
    $$
    H = F^{\sfrac{1}{2}}\G{F}(H) + \G{F}(H)F^{\sfrac{1}{2}}
    $$
    establishig the Sylvester equation \eqref{eq:sylvester}.

    \medskip

    (ii) is a simple consequence of (i). Indeed, let $\{e_i\}_{i\geq 1}$ denote a CONS of eigenfunctions for $F$, with corresponding positive eigenvalues $\{\lambda_i\}_{i\geq 1}$.
    For any $i,j\geq 1$:
    \begin{align*}
    \langle H e_i, e_j \rangle 
    &= \langle F^{\sfrac{1}{2}}\G{F} (H) e_i, e_j \rangle  + \langle \G{F} (H) F^{\sfrac{1}{2}} e_i, e_j \rangle\\
    &= \langle \G{F} (H) e_i, F^{\sfrac{1}{2}}e_j \rangle  + \langle \G{F} (H) F^{\sfrac{1}{2}} e_i, e_j \rangle 
    = (\lambda_i^{\sfrac{1}{2}} + \lambda_j^{\sfrac{1}{2}} )\cdot\langle \G{F} (H) e_i, e_j \rangle  
    \end{align*}

    \medskip

    (iii) is a simple consequence of (ii) Indeed, by \eqref{eq : sqrt id}, for any $\alpha>0$:
        $$
        (\lambda_i^{\sfrac{1}{2}} + \lambda_j^{\sfrac{1}{2}} )\cdot\langle \G{F} (F^{\alpha}H) e_i, e_j \rangle 
        = \lambda_j^{\alpha}\langle H e_i, e_j \rangle
       = (\lambda_i^{\sfrac{1}{2}} + \lambda_j^{\sfrac{1}{2}} )\cdot\langle F^{\alpha}\G{F} (H) e_i, e_j \rangle.
        $$

    \medskip

    (iv) follows by (ii), using that $G\prec F$ implies by Lemma~\ref{lemma : A<B} that there exists a $J$ with $\|J\|_{\infty}\leq 1$ such that $G^{\sfrac{1}{2}} = F^{\sfrac{1}{2}}J$:
   \begin{align*}
    \|\G{F}(G^{\sfrac{1}{2}} H)\|_{2}^2
     &= \sum_{i,j}\langle\G{F}(F^{\sfrac{1}{2}}J H) e_i, e_j\rangle^2 \\
    &=  \sum_{i,j}\frac{\lambda_j}{(\lambda_i^{1/2} + \lambda_j^{1/2})^2}\langle J H e_i,e_j\rangle^2 \\
    &\leq  \sum_{i,j}\langle JH e_i, e_j\rangle^2 \leq \| H\|_{2}^2,
    \end{align*}

    \medskip

   (v) follows from the fact that the inverse map:
$(H\mapsto  \G{A^2}(AHA))^{-1} = (X \mapsto  A^{-1}X+XA^{-1}$
is decreasing in $A$. Indeed, whenever $A\preceq B$:
\begin{align*}
\langle A^{-1}X+XA^{-1} , \: X \rangle 
= \:
2\trace(XA^{-1}X) 
=& \:\sum_{j\geq 1}\langle A^{-1} (Xe_j), (Xe_j)\rangle
\\\geq& \:
 \sum_{j\geq 1}\langle B^{-1} (Xe_j), (Xe_j)\rangle
\\=&\:
 2\trace(XB^{-1}X)
= \:
\langle B^{-1}X+XB^{-1} , \: X \rangle, 
\end{align*}

    \medskip

    We conclude by showing the differentiablity statement in (vi).
    For any $F, H\in\Bscr_2^*$, as in \cite{del2018taylor}, we note that the following identity holds:
    \begin{equation}\label{eqn:CFG}
    \begin{split}
    \CFG \: :=& \: \left(g(F)(g(F+H) - g(F))+(g(F+H) + g(F)) g(F)\right)
    \\ = &\:  - (g(F+H)-g(F))^2 + (g(F+H)^2 - g(F)^2)
    \end{split}
    \end{equation}
    However, by Sylvester's equation \eqref{eq:sylvester}, the quantity  $\CFG$ can also be written as:
    \begin{align*}
    \CFG \:&=\: (F\Sigma F)^{\sfrac{1}{2}}\G{F\Sigma F}(\CFG) + \G{F\Sigma F}(\CFG)(F\Sigma F)^{\sfrac{1}{2}}
    \\ &=\: g(F)\G{F\Sigma F}(\CFG) + \G{F\Sigma F}(\CFG)g(F)
    \end{align*}
    by which necessarily $g(F+H)- g(F) = \G{F\Sigma F}(\CFG)$, 
    and hence we obtain the following decomposition
    \begin{align*}
    g(F+H)- g(F) &=\G{F\Sigma F}\left[ g(F+H)^2 - g(F)^2\right] -\G{F\Sigma F}\left[(g(F+H)-g(F))^2 \right]
    \\ &=\G{F\Sigma F}\left[ F\Sigma H + H\Sigma F\right] + o(H)
    \end{align*}
    proving that that $g$ is differentiable, with derivative given by \eqref{eq:sqrtder}.


    
\end{proof}

The main ingredient in the proof of the Central limit theorem is the following approximation for a fixed point functional on the space of Hilbert-Schmidt operators.

\begin{lemma}\label{lemma : sample fix point functional and properties -- HS -- 2}
    Let $\Sigma_1,\ldots,\Sigma_n$ be i.i.d.\ copies of a random element  $\Sigma$ in $\CovSpace(\Hbb)$ satisfying~\ref{B2}, for $\Hbb$ a separable (possibly infinite dimensional) Hilbert space. Let $\bary\in\CovSpace$ be the (unique) \Frechet mean of $\Sigma$,  and $\{\empbary\}_{n\geq 1}$ a sequence of empirical \Frechet means. Then:
\begin{equation}\label{eq:the_1_4_CLT}
       \frac{1}{n}\sum_{j=1}^n (\bary^{\sfrac{1}{2}} \Sigma_j\bary^{\sfrac{1}{2}})^{\sfrac{1}{2}} - \bary  = 
\frac{1}{n}\sum_{j=1}^n \G{{\bary^{\sfrac{1}{2}}\Sigma_j\bary^{\sfrac{1}{2}}}} (\bary^{\sfrac{1}{2}}T_{\bary}^{\Sigma_j}(\bary^{\sfrac{1}{2}} \H + \H\bary^{\sfrac{1}{2}})T_{\bary}^{\Sigma_j}\bary^{\sfrac{1}{2}}) + R_n,
\end{equation}
where $\H:= \bary^{\sfrac{1}{2}} - \empbary^{\sfrac{1}{2}} $, and $\bary^{\sfrac{1}{2}}R_n, R_n\bary^{\sfrac{1}{2}} = o(\H) $ as $n \to \infty$.
\end{lemma}

\begin{proof}[Proof of Lemma~\ref{lemma : sample fix point functional and properties -- HS -- 2}]
    For $t\in[0,1]$ define $F_t := \bary^{\sfrac{1}{2}} + t\H$, with $\H:=(\empbary^{\sfrac{1}{2}} - \bary^{\sfrac{1}{2}})$. Consider the map:
$$
\phi_n\,:\, [0,1] \mapsto \Bscr_2, \qquad t \mapsto 
=  \frac{1}{n}\sum_{i=1}^n 
(F_t\Sigma_j F_t)^{\sfrac{1}{2}} - F_t^2.
$$
By differentiablity statement (vi) in Lemma~\ref{lemma : sqrt der}:
\begin{equation*}\label{eqn:lemmapreclt:sum}
\begin{split}
  \phi_n'(t) \:&=\:
  \frac{1}{n}\sum_{i=1}^n 
     \left(\G{ Q_{j,t}^2 } (\H \Sigma_j F_t +  F_t\Sigma_j\H) \right) - F_t\H - \H F_t
     \\&=\: 
  \frac{1}{n}\sum_{i=1}^n 
     \left(\G{ Q_{j,t}^2 } (\H F_t^{-1} Q_{j,t}^2 +  Q_{j,t}^2F_t^{-1}\H) \right) - F_t\H - \H F_t
     \\&=\: 
    \frac{1}{n}\sum_{j=1}^n\left(\G{ Q_{j,t}^2 } ( (\H F_t^{-1} Q_{j,t})Q_{j,t}  +  Q_{j,t}(\H F_t^{-1} Q_{j,t})^* ) \right) - F_t\H - \H F_t
\end{split}
\end{equation*}
where $Q_{j,t}:= (F_t\Sigma_jF_t)^{\sfrac{1}{2}} = F_tT_{j,t}F_t$ and $T_{j,t}:=T_{F_t^2}^{\Sigma_j}$, and $t\in[0,1]$. 
It is easy to see that, by \eqref{eq:sylvester}, for all $X\in \Bscr_2$ and $j$:
$$
\G{Q_{j,t}^2}(Q_{j,t}\X +\X^* Q_{j,t})= X+X^* - \G{Q_{j,t}^2}(\X Q_{j,t} + Q_{j,t}\X^*)
$$
Setting $\X = \H F_t^{-1} Q_{j,t}$, and noticing that $\X^*= F_tT_{j,t}\H$,  shows in fact:
\begin{align*}
 \phi_n'(t) \:=& \: -\frac{1}{n}\sum_{j=1}^n\G{ Q_{j,t}^2 } (Q_{j,t} ( \H F_t^{-1}  + F_t^{-1} \H ) Q_{j,t}) 
 \\ &\quad+ \frac{1}{n}\sum_{j=1}^n\left( F_t(T_{j,t}-\Id)\H + \H(T_{j,t}-\Id)F_t\right)
\end{align*} 
which is easily shown to be bounded uniformly in $t\in[0,1]$.
We may thus extend the Taylor polynomial with integral reminder, and obtain:
\begin{align*}
\frac{1}{n}\sum_{j=1}^n (\bary^{\sfrac{1}{2}} \Sigma_j\bary^{\sfrac{1}{2}})^{\sfrac{1}{2}} - \bary  =& 
-\int_{0}^{1}\frac{1}{n}\sum_{j=1}^n\G{ Q_{j,t}^2 } (Q_{j,t} ( \H F_t^{-1}  + F_t^{-1} \H ) Q_{j,t}) \diff t
 \\ &\quad+ \int_{0}^{1}\frac{1}{n}\sum_{j=1}^n\left( F_t(T_{j,t}-\Id)\H + \H(T_{j,t}-\Id)F_t\right)\diff t
\end{align*}
where we have used the fixed-point characterisation \eqref{eqn:pop-fixed-pt}.
Note that $F_t\preceq ((1-t) + t \|\bary\|^{\sfrac{1}{2}})\bary^{\sfrac{1}{2}}$. Hence, by Lemma~\ref{lemma : A<B}, using continuity, and the fact that \eqref{eq : pop bary, fixed point on map} implies $\frac{1}{n}\sum_{j=1}^nT_{\bary}^{\Sigma_j}\H = \H + o(\H)$ as $n\to\infty$, we readily obtain that there exists $R_n$ such that
\begin{equation}
    \label{eq:the_1_4_CLT_final}
\frac{1}{n}\sum_{j=1}^n (\bary^{\sfrac{1}{2}} \Sigma_j\bary^{\sfrac{1}{2}})^{\sfrac{1}{2}} - \bary  = 
-\frac{1}{n}\sum_{j=1}^n\G{ Q_{j}^2 } (Q_{j} ( \H \bary^{-\sfrac{1}{2}}  + \bary^{-\sfrac{1}{2}} \H ) Q_{j}) + R_n
\end{equation}
where $Q_j = Q_{j,0}=(\bary^{\sfrac{1}{2}}\Sigma_j\bary^{\sfrac{1}{2}})^{\sfrac{1}{2}}$ and $R_n = R_n(\H)$ satisfies:
\begin{equation}\label{eq : estimating the remainder}
\bary^{-\sfrac{1}{2}}R_n = o(\H) \quad \text{and} \quad  R_n\bary^{-\sfrac{1}{2}}= o(\H),\qquad\text{ as } n \to \infty.
\end{equation}
\end{proof}

We can finally state and prove the central limit theorem.

\begin{theorem}\label{thm : clt}
 Let $\Sigma_1,\ldots,\Sigma_n$ be i.i.d.\ copies of a random element  $\Sigma$ in $\CovSpace(\Hbb)$ satisfying~\ref{B2},  for  $\Hbb$ a separable (possibly infinite dimensional) Hilbert space. Let $\bary\in\CovSpace$ be the (unique) \Frechet mean  of $\Sigma$, and $\{\empbary\}_{n\geq 1}$ the sequence of empirical \Frechet means. Then:
 \begin{equation}
    \label{eq:CLTthm}    
\Pi( \bary, \empbary) = O(n^{-\sfrac{1}{2}}), \quad \quad \mbox{almost surely},
\end{equation}
and the scaled difference $\sqrt{n}(\bary^{\sfrac{1}{2}}-\empbary^{\sfrac{1}{2}})$ converges weakly to a {centred} Gaussian random element in the space $\Bscr_2$ of Hilbert-Schmidt operators.
\end{theorem}
\begin{proof}[Proof of Theorem~\ref{thm : clt}]
Set $\H= \empbary^{\sfrac{1}{2}} - \bary^{\sfrac{1}{2}}$. 
For tidiness, let us also write $T_j:= T_{\bary}^{\Sigma_j}$ and $Q_j:=(\bary^{\sfrac{1}{2}}\Sigma_j\bary^{\sfrac{1}{2}})^{\sfrac{1}{2}} = \bary^{\sfrac{1}{2}}T_j\bary^{\sfrac{1}{2}}$ for $j=1,\dots,n$ and $n\geq 1$. 
Consider the expansion in \eqref{eq:the_1_4_CLT_final}.
Assumption~\ref{B2} implies via Lemma~\ref{lemma:boundbothmaps} that the optimal maps $T_j$ are boundedly invertible, and in particular that:
$$
\forall\:h\in\Hbb\::\quad \langle Q_jh,h\rangle = \langle \bary^{\sfrac{1}{2}} T_j\bary^{\sfrac{1}{2}} h,h\rangle = \|T_j^{\sfrac{1}{2}}\bary^{\sfrac{1}{2}} h\| \: \gtrsim \|\bary^{\sfrac{1}{2}} h\|\: = \langle \bary h,h\rangle.
$$
i.e.\ that $ \bary \precsim Q_j$ almost surely.
Consequently, by \eqref{eqn:G increasing}, we obtain that:
$$
  H \mapsto \frac{1}{n}\sum_{j=1}^n  \G{Q_j^2}(Q_j(\bary^{-\sfrac{1}{2}}( \bary^{\sfrac{1}{2}}H + H \bary^{\sfrac{1}{2}})\bary^{-\sfrac{1}{2}})Q_j)  
\: \succeq \:
 H \mapsto \G{\bary^2}(\bary^{\sfrac{1}{2}}( \bary^{\sfrac{1}{2}}H + H \bary^{\sfrac{1}{2}})\bary^{\sfrac{1}{2}}).
$$
as operators on $\Bscr_2$.
Note that the inverse of the right hand term is given by:
$$
X\mapsto \G{\bary}(\bary^{\sfrac{1}{2}}X\bary^{-\sfrac{1}{2}} + \bary^{-\sfrac{1}{2}}X\bary^{\sfrac{1}{2}}).
$$
Indeed, composing the first with the second:
\begin{align*}
     \G{\bary^2}&\left(\bary^{\sfrac{1}{2}}\left( \bary^{\sfrac{1}{2}}\G{\bary}(\bary^{\sfrac{1}{2}}X\bary^{-\sfrac{1}{2}} + \bary^{-\sfrac{1}{2}}X\bary^{\sfrac{1}{2}}) + \G{\bary}(\bary^{\sfrac{1}{2}}X\bary^{-\sfrac{1}{2}} + \bary^{-\sfrac{1}{2}}X\bary^{\sfrac{1}{2}}) \bary^{\sfrac{1}{2}}\right)\bary^{\sfrac{1}{2}}\right) \\
      =&\: \G{\bary^2}\left( \bary^{\sfrac{1}{2}}\G{\bary}(\bary X + X\bary) + \G{\bary}(\bary X + X\bary) \bary^{\sfrac{1}{2}})\right) \\
      =& \:\G{\bary^2}\left( \bary( \bary^{\sfrac{1}{2}}\G{\bary}( X) + \G{\bary}( X)\bary^{\sfrac{1}{2}}) + ( \bary^{\sfrac{1}{2}}\G{\bary}( X) + \G{\bary}( X)\bary^{\sfrac{1}{2}}) \bary)\right) \\
       =&\: \G{\bary^2}\left( \bary X + X\bary\right) 
      = \:\bary \G{\bary^2}(X) + \G{\bary^2}(X) \bary 
       = \: X 
\end{align*}
where we have used Sylverster's identity twice, and repeatedly that $\G{\bary}$ commutes with $\bary^{\sfrac{1}{2}}$ as stated in Lemma~\ref{lemma : sqrt der}. We  now rewrite \eqref{eq:the_1_4_CLT_final} as:
$$
\frac{1}{n}\sum_{j=1}^n (\bary^{\sfrac{1}{2}} \Sigma_j\bary^{\sfrac{1}{2}})^{\sfrac{1}{2}} - \bary  = 
\Acov_n(\H) + R_n.
$$
The argument just presented shows that $\Acov_n$ admits a left-inverse $\Acov_n^{-1}$ which is \textit{uniformly} bounded (in $n$) on all terms appearing in \eqref{eq:the_1_4_CLT_final}. In particular:
\begin{align*}
\left \| \Acov_n^{-1}\left(\frac{1}{n}\sum_{j=1}^n (\bary^{\sfrac{1}{2}} \Sigma_j\bary^{\sfrac{1}{2}})^{\sfrac{1}{2}} - \bary\right) \right\|_{2} \leq & \:
\left \|
\G{\bary^2}\left(
\bary \left(\frac{1}{n}\sum_{j=1}^n T_i - \Id\right)\right)\right\|_{2}
\\\leq&\: \text{const.}\cdot
\left \|
\bary^{\sfrac{1}{2}} \frac{1}{n}\left(\sum_{j=1}^n T_i - \Id\right)\right\|_{2}
\end{align*}
Also notice that, by the bound on the Taylor remainder \eqref{eq : estimating the remainder}, we have that $$\left\|\Acov_n^{-1}(R_n)\right\|_{2} \leq \text{const.}\cdot\left\| \G{\bary}(\bary^{\sfrac{1}{2}}R_n\bary^{-\sfrac{1}{2}} + \bary^{-\sfrac{1}{2}}R_n\bary^{\sfrac{1}{2}})\right\|_{2} = o(\|\H\|_{2}).
$$
Therefore, applying such inverse to \eqref{eq:the_1_4_CLT_final} and scaling by $n^{\sfrac{1}{2}}$ gives:
\begin{align*}
\| \H\|_{2}  + o(\H)
\: \leq\:\text{const.}\cdot\left\| n^{-1}\sum_{j=1}^n   \left(T_{j} - \Id\right)\bary^{\sfrac{1}{2}}\right\|_{2}
\end{align*}
which establishes the rate of convergence, employing the CLT on the Hilbert space $\Bscr_2$ for the sequence of i.i.d.\ mean-zero objects $\{T_{j}\bary^{\sfrac{1}{2}} - \bary^{\sfrac{1}{2}}\}_{j\geq 1}$. 

Finally, it is easy to see that $\Acov_n(\cdot) \to \Acov(\cdot)$ strongly, almost surely, as $n\to\infty$,  where 
$$\Acov(H):=\Ebb\left[ \G{{\bary^{\sfrac{1}{2}}\Sigma\bary^{\sfrac{1}{2}}}} (\bary^{\sfrac{1}{2}}T_{\bary}^{\Sigma}(\bary^{\sfrac{1}{2}} H + H\bary^{\sfrac{1}{2}})T_{\bary}^{\Sigma}\bary^{\sfrac{1}{2}}) \right].$$
By linearity, boudedness and the convergence of the sequence of maps $\Acov_n$, the Gaussian limit in distribution is preserved, which completes the proof.
\end{proof}

\section*{Acknowledgments}
This research was supported by a Swiss NSF research grant.

\bibliography{bib.bib}       

\end{document}